\documentclass[11pt]{amsart}

\usepackage[T1]{fontenc}
\usepackage{lmodern}
\usepackage{microtype}
\usepackage[a4paper,margin=29mm]{geometry}
\usepackage{amsmath,amssymb,amsthm,mathtools}
\usepackage{xcolor}
\usepackage{hyperref}

\numberwithin{equation}{section}

\hypersetup{
  colorlinks=true,
  linkcolor=blue!50!black,
  citecolor=blue!50!black,
  urlcolor=blue!50!black,
  pdftitle={A Newton Identity and Finite-Rank Reconstruction
  for the Queer Lie Superalgebra},
  pdfauthor={Abhishek Das and Santosha Pattanayak},
  pdfsubject={The center of the enveloping algebra of the queer Lie
  superalgebra}
}

\allowdisplaybreaks

\newtheorem{theorem}{Theorem}[section]
\newtheorem{proposition}[theorem]{Proposition}
\newtheorem{lemma}[theorem]{Lemma}
\newtheorem{corollary}[theorem]{Corollary}

\theoremstyle{definition}

\theoremstyle{remark}
\newtheorem{remark}[theorem]{Remark}
\newtheorem{example}[theorem]{Example}

\newcommand{\C}{\mathbb C}
\newcommand{\Q}{\mathbb Q}
\newcommand{\gl}{\mathfrak{gl}}
\newcommand{\q}{\mathfrak q}
\newcommand{\U}{\mathrm U}
\newcommand{\HC}{\operatorname{HC}}

\newcommand{\fall}[2]{(#1\mathbin{\downarrow}#2)}
\newcommand{\calD}{\mathcal D}
\newcommand{\bars}[1]{\overline{#1}}

\title[Newton identity for the queer Lie superalgebra]
{A Newton Identity and Finite-Rank Reconstruction
for the Queer Lie Superalgebra}

\author[A. Das]{Abhishek Das}
\address{
Department of Mathematics,
Indian Institute of Technology Kanpur,
Kanpur, Uttar Pradesh 208016, India
}
\email{abhidas20@iitk.ac.in}

\author[S. Pattanayak]{Santosha Pattanayak}
\address{
Department of Mathematics,
Indian Institute of Technology Kanpur,
Kanpur, Uttar Pradesh 208016, India
}
\email{santosha@iitk.ac.in}

\subjclass[2020]{Primary 17B35; Secondary 17B10, 05E05}

\keywords{Queer Lie superalgebra, center of an enveloping algebra,
Capelli element, factorial Schur \(Q\)-function, Newton identity,
Hankel determinant, resultant}

\begin{document}

\begin{abstract}
We establish a Newton-type identity for the queer Lie superalgebra
\(\mathfrak q_N\), relating Sergeev's odd cyclic central elements to
Nazarov's one-row Capelli elements.  The identity is obtained by
comparing Ivanov's generating function for factorial Schur
\(Q\)-functions with the queer Perelomov-Popov product of Grigoryev
and Nazarov.  Its coefficient expansion yields a triangular change of
generators between the odd cyclic and odd one-row families.  In
particular, the odd one-row Capelli elements generate the center, while
the even one-row elements are redundant.

In fixed rank, we derive determinantal relations and a generic
reconstruction theorem.  The basic cyclic Hankel determinant is
identified with a resultant and factored into the
failure-of-strong-typicality and shifted-resonance factors.  After
localization at this determinant, the center is generated by the first
\(2N\) odd cyclic elements; consequently, generic central characters
are determined by their values on these elements.
\end{abstract}

\maketitle

\section{Introduction}

Classical Newton identities relate power sums to elementary or complete
symmetric functions.  In enveloping algebras, their analogues compare
trace-type central elements with determinant-type Capelli elements.
For the general-linear Lie superalgebra \(\gl(m|n)\), the central
Newton relation between the coefficients of the Capelli Berezinian and
the Gelfand invariants follows from Nazarov's Liouville formula for the
quantum Berezinian \cite{Nazarov1991}.  More recently, Erat, Kannan,
and Kanungo gave an explicit formulation and derivation of this
relation \cite[Theorem~6.2.1]{EKK2025}.  Related recurrences, Hankel
determinants, resultants, and reconstruction formulas for rational
supermatrix characteristic functions were developed by Khudaverdian
and Voronov \cite{KV2005}, building on work of Kantor and Trishin
\cite{KT1997,KT1999}.

The queer case has a different invariant-theoretic structure: its
center is governed by supersymmetric functions and strict partitions.
Sergeev constructed cyclic central elements in \(\U(\q_N)\)
\cite{Sergeev1983}, Nazarov and Sergeev proved that the odd-indexed
cyclic elements generate the center
\cite[Proposition~1.1]{NazarovSergeev2006}.  The authors extended this
generation result to loop Lie superalgebras; specializing to
\(A=\C\) recovers the ordinary queer case
\cite[Theorem~5.3]{DP2025}.  Luo and Wang gave an independent
Schur--Weyl-theoretic construction of the cyclic generators
\cite[Section~3.2 and Proposition~3.12]{LuoWang2025}.

Nazarov constructed a distinguished Capelli basis indexed by strict
partitions and proved that its one-row elements generate the center
\cite[Proposition~4.8]{Nazarov1997}.  The eigenvalues of these elements
are described by factorial Schur \(Q\)-functions; see Ivanov
\cite{Ivanov2005}, Alldridge--Sahi--Salmasian \cite{ASS2018}, and the
precise Harish-Chandra normalization recorded by Kashuba and Molev
\cite[Remark~5.4, Equation~(5.5)]{KM2025}.  On the cyclic side,
Grigoryev and Nazarov obtained a queer Perelomov--Popov formula
expressing the central characters of the odd cyclic elements by a
rational product \cite[Section~2.2]{GN2019}.

The results of this paper have two complementary parts.  First, we show
that the factorial Schur \(Q\)-product and the queer
Perelomov--Popov product, although arising from different
constructions, are two forms of the same central Newton relation.
This identity yields an index-triangular change between the odd cyclic
and odd one-row Capelli families.  In particular, the odd one-row
elements generate the center, while the even one-row elements are
redundant.  Second, in fixed rank, we use the rationality of the
Harish-Chandra images to obtain determinantal relations, factor the
basic Hankel divisor, and reconstruct the localized center from
finitely many odd cyclic elements.

Fix \(N\geq1\), and write
\[
 Z_N=Z(\U(\q_N)).
\]
We use \(\lambda_1,\ldots,\lambda_N\) for the standard
Harish-Chandra coordinates.  Let \(c_{2r-1}\in Z_N\) denote Sergeev's
odd cyclic elements, and let \(C_{(r)}\) denote Nazarov's Capelli
element corresponding to the one-row strict partition \((r)\).  We
normalize
\[
 D_0=1,
 \qquad
 D_r=\frac{C_{(r)}}{r!}\quad(r\geq1),
\]
and form the reciprocal falling-factorial series
\[
 \calD_N(u)=\sum_{r\geq0}\frac{D_r}{\fall{u}{r}},
 \qquad
 \fall{u}{0}=1,
 \qquad
 \fall{u}{r}=u(u-1)\cdots(u-r+1)\quad(r\geq1).
\]
Our first main result identifies a consecutive quotient of this series
with the generating series of the cyclic elements.

\begin{theorem}[Main Newton identity]
\label{thm:intro-main}
In \(Z_N[[u^{-1}]]\), one has
\begin{equation}
 \boxed{
 \frac{\calD_N(u)}{\calD_N(u-1)}
 =
 1-\sum_{r\geq1}\frac{c_{2r-1}}{[u(u+1)]^r}.}
 \label{eq:intro-main}
\end{equation}
\end{theorem}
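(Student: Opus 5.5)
Since the Harish-Chandra homomorphism \(\HC\colon Z_N\to\C[\lambda_1,\ldots,\lambda_N]\) is injective, I will verify \eqref{eq:intro-main} after applying \(\HC\) coefficientwise in \(u^{-1}\). Both sides then become explicit rational functions of \(u\) with coefficients in the supersymmetric polynomial ring, and it suffices to check equality of these rational functions, expanded at \(u=\infty\).

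\textbf{Step 1: the right-hand side.} For the odd cyclic side I will take the queer Perelomov--Popov formula of Grigoryev--Nazarov \cite[Section~2.2]{GN2019}. It expresses \(1-\sum_{r\geq1}\HC(c_{2r-1})\,[u(u+1)]^{-r}\) as a finite product over \(i=1,\ldots,N\) of factors rational in \(u\). Each factor should be something of the shape
\[
\frac{(u-\lambda_i)(u+\lambda_i+1)}{(u+\lambda_i)(u-\lambda_i+1)},
\]
up to the paper's shift convention. First I will pin down the exact shift, i.e.\ the variable \(u(u+1)\), by comparing with the normalization of \(c_{2r-1}\) used here.

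\textbf{Step 2: the left-hand side.} For the one-row Capelli elements I will use the Harish-Chandra images in the Kashuba--Molev normalization \cite[Equation~(5.5)]{KM2025}. There \(\HC(C_{(r)})/r!\) is the one-row factorial Schur \(Q\)-function \(Q_{(r)}(\lambda)\), up to a fixed power of \(2\). I will then invoke Ivanov's generating function \cite{Ivanov2005} for one-row factorial \(Q\)-functions: the reciprocal falling-factorial series \(\sum_r Q_{(r)}(\lambda)/\fall{u}{r}\) equals a finite product
\[
\prod_{i=1}^N \frac{u+\lambda_i}{u-\lambda_i}
\]
with appropriate shifts. It is a rational function whose numerator and denominator are products of linear factors in \(u\). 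Evaluating this at \(u-1\) and taking the quotient, each factor \(i\) contributes a ratio of four linear forms.

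\textbf{Step 3: comparison.} I will check factor by factor that this ratio coincides with the Perelomov--Popov factor from Step 1. Both sides are products over \(i\), so this is a one-variable identity. Both sides equal \(1\) at \(u=\infty\), and the series identity follows from the rational identity because all expansions are in \(u^{-1}\).

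\textbf{Main obstacle.} I expect the difficulty to be normalization bookkeeping rather than any conceptual step. Ivanov's shifts and powers of \(2\), the Kashuba--Molev convention for \(\HC\) (including the \(\rho\)-shift), and the Grigoryev--Nazarov sign and shift conventions must be made mutually compatible. A mismatch would show up as \(u\) versus \(u\pm\tfrac12\) or a factor \(2\) in \(D_r\). I will first test the case \(N=1\) at low order in \(u^{-1}\), comparing \(D_1\) with \(c_1\), to fix all constants before doing the general product comparison.
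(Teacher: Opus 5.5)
Your proposal is correct and follows the paper's proof essentially step for step. The paper applies $\HC$ coefficientwise and uses Ivanov's product $\HC(\calD_N(u))=\prod_i\frac{u+1+\lambda_i}{u+1-\lambda_i}$, whose consecutive quotient is exactly your factor $\frac{(u-\lambda_i)(u+\lambda_i+1)}{(u+\lambda_i)(u-\lambda_i+1)}=\frac{u(u+1)-\lambda_i(\lambda_i+1)}{u(u+1)-\lambda_i(\lambda_i-1)}$; it then matches this with the Grigoryev--Nazarov product under $z=[u(u+1)]^{-1}$ and concludes by injectivity. The normalization issues you flag are already settled by the cited formulas: $\HC(D_r)=Q^-_{(r)}$ holds exactly because $g_{(r)}=1$, and $\q_N$ has no $\rho$-shift, so no half-shift of $u$ and no extra factor of $2$ arise.
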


The appearance of the quadratic spectral variable \(u(u+1)\) is
essential.  The quotient of Ivanov's one-row product at consecutive
arguments becomes the Grigoryev--Nazarov product precisely after the
substitution
\[
 z=[u(u+1)]^{-1}.
\]
Injectivity of the Harish-Chandra homomorphism, applied
coefficientwise, then lifts the resulting equality of Harish-Chandra
images to the completed center.  Thus the queer identity is not a
formal specialization of the \(\gl(m|n)\) formula: both its spectral
parameter and its proof reflect the factorial Schur \(Q\)-structure of
the queer center.  To the best of our knowledge,
Theorem~\ref{thm:intro-main} has not appeared previously.

The first main consequences of Theorem~\ref{thm:intro-main} concern
the comparison of generators.  Expanding the reciprocal falling
factorials in ordinary powers of \(u^{-1}\) and comparing coefficients
gives an explicit recursion governed by Stirling numbers of the second
kind; see Proposition~\ref{prop:coefficient-recursion}.  In particular,
for every \(p\geq1\),
\begin{align*}
 D_{2p}
 &\in\Q[D_1,D_3,\ldots,D_{2p-1}],\\
 c_{2p-1}
 &=(2p-1)D_{2p-1}
   \pmod{\Q[D_1,D_3,\ldots,D_{2p-3}]}.
\end{align*}
Consequently,
\[
 \C[c_1,c_3,\ldots,c_{2p-1}]
 =
 \C[D_1,D_3,\ldots,D_{2p-1}]
 \qquad(p\geq1).
\]
Since the odd cyclic elements generate \(Z_N\), it follows that
\[
 Z_N=\C[D_1,D_3,D_5,\ldots],
\]
and every even one-row Capelli element is a polynomial in the preceding
odd ones.  We also give a filtered proof of the displayed generation
result that is independent of the Newton identity.  Nazarov proved
generation by the full one-row family
\cite[Proposition~4.8]{Nazarov1997}; the filtered argument given here
isolates the smaller odd-indexed family directly.

We next turn to our second main contribution, namely the fixed-rank
theory.  Put
\[
 b_r=c_{2r-1},
 \qquad
 \calD_N(u)=1+\sum_{r\geq1}\alpha_ru^{-r}.
\]
The rationality of the Harish-Chandra generating functions associated
with the sequences \((b_r)\) and \((\alpha_r)\) yields the central
Hankel relations
\[
 \det(b_{r+i+j})_{0\leq i,j\leq N}=0,
 \qquad
 \det(\alpha_{r+i+j})_{0\leq i,j\leq N}=0
 \qquad(r\geq1).
\]
For the coefficients \(D_r\) in the Newton basis, the corresponding
finite-rank relation instead takes the form of a difference
determinant.  This distinction reflects the fact that the ordinary
generating series
\(\sum_{r\geq0}D_rz^r\) is not rational, even when \(N=1\), while
multiplication by the spectral variable acts on the Newton
coefficients through a finite-difference operator.

Consider the basic \(N\times N\) cyclic Hankel determinant
\[
 \Delta_N=\det(b_{i+j-1})_{1\leq i,j\leq N}.
\]
Its factorization and its role in finite-rank reconstruction are
summarized in our second main theorem.

\begin{theorem}[Hankel divisor and generic reconstruction]
\label{thm:intro-reconstruction}
One has
\[
 \HC(\Delta_N)
 =
 (-1)^{\binom N2}
 \prod_{i,j=1}^N(\lambda_i+\lambda_j)
 \prod_{i\ne j}(\lambda_i-\lambda_j-1).
\]
In particular, \(\Delta_N\neq0\), and
\[
 Z_N[\Delta_N^{-1}]
 =
 \C[c_1,c_3,\ldots,c_{4N-1},\Delta_N^{-1}].
\]
Consequently, if \(\chi,\chi':Z_N\to\C\) are algebra homomorphisms
such that
\[
 \chi(\Delta_N)\neq0
 \quad\text{and}\quad
 \chi(c_{2r-1})=\chi'(c_{2r-1})
 \qquad(1\leq r\leq2N),
\]
then \(\chi=\chi'\).
\end{theorem}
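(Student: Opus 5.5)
The plan is to work entirely on the Harish-Chandra side and pull back to \(Z_N\) by injectivity of \(\HC\). The identity \eqref{eq:intro-main}, with Ivanov's one-row product for \(\HC(\calD_N(u))\), gives the image of the cyclic generating series as a rational function with \(N\) poles. Concretely, I expect the quotient of Ivanov's product at consecutive arguments to collapse to
\[
 1-\sum_{r\ge1}\HC(b_r)\,w^{-r}=\prod_{i=1}^N\frac{w-\mu_i}{w-\nu_i},
\]
where \(w\) is the quadratic spectral variable (\(u(u+1)\), up to the normalization of the shift). The quadratic forms \(\mu_i,\nu_i\) in \(\lambda_i\) should be \(\mu_i=\lambda_i(\lambda_i+1)\) and \(\nu_i=\lambda_i(\lambda_i-1)\). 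The point to keep is that
\[
 \nu_i-\mu_j=(\lambda_i+\lambda_j)(\lambda_i-\lambda_j-1).
\]
Pinning down the exact normalization of \(w\), \(\mu_i\) and \(\nu_i\) so that this holds is the first step, and the main bookkeeping risk.

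Next I take the partial fraction decomposition. This gives \(\sum_r\HC(b_r)w^{-r}=\sum_i A_i/(w-\nu_i)\), with residues
\[
 A_i=-\frac{\prod_j(\nu_i-\mu_j)}{\prod_{j\ne i}(\nu_i-\nu_j)}.
\]
It follows that \(\HC(b_r)=\sum_iA_i\nu_i^{\,r-1}\), so the Hankel matrix \((\HC(b_{i+j-1}))\) factors as \(V\,\mathrm{diag}(A_i)\,V^{T}\), where \(V=(\nu_j^{\,i-1})\) is the Vandermonde matrix. Hence
\[
 \HC(\Delta_N)=\prod_iA_i\cdot\prod_{i<j}(\nu_i-\nu_j)^2.
\]
The Vandermonde factors cancel against the denominators of the \(A_i\) up to the sign \((-1)^{\binom N2}\) and the overall \((-1)^N\). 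What remains is the resultant \(\prod_{i,j}(\nu_i-\mu_j)\) of numerator and denominator. Rewriting it as above gives \(\prod_{i,j}(\lambda_i+\lambda_j)\prod_{i\ne j}(\lambda_i-\lambda_j-1)\). The diagonal terms \(i=j\) contribute \(2\lambda_i\cdot(-1)\), which absorbs the remaining \((-1)^N\). The real work is checking the total sign, and I would confirm it at \(N=1\) directly. Since this image is a nonzero polynomial, \(\Delta_N\neq0\).

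For reconstruction, let \(e_k=e_k(\nu)\). The sequence \(\HC(b_r)\) satisfies the linear recurrence with characteristic polynomial \(\prod_i(x-\nu_i)\):
\[
 \HC(b_{r+N})=\sum_{k=1}^N(-1)^{k-1}\HC(e_k)\HC(b_{r+N-k})\qquad(r\ge1).
\]
Taking \(r=1,\dots,N\) gives a linear system for \((e_1,\dots,e_N)\). Its coefficient matrix is the Hankel matrix \((\HC(b_{i+j-1}))\), up to column reversal and signs, and its right-hand sides involve \(b_{N+1},\dots,b_{2N}\). By Cramer's rule each \(\HC(e_k)\) lies in \(\HC\bigl(\C[b_1,\dots,b_{2N}]\bigr)[\HC(\Delta_N)^{-1}]\).

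To transfer this to \(Z_N[\Delta_N^{-1}]\), I define central elements \(\epsilon_k\) by the Cramer formulas. The recurrence identity in \(\epsilon_k\) and the \(b\)'s then holds in \(Z_N[\Delta_N^{-1}]\), because \(\HC\) extends injectively to the localization. Induction on \(r\) places every \(b_r=c_{2r-1}\) in \(\C[c_1,\dots,c_{4N-1},\Delta_N^{-1}]\). Since the odd cyclic elements generate \(Z_N\), this gives the localized equality; the reverse inclusion is trivial, because \(\Delta_N\) is itself a polynomial in \(b_1,\dots,b_{2N-1}\).

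Finally, let \(\chi\) and \(\chi'\) be as in the statement. Since \(\chi(\Delta_N)=\chi'(\Delta_N)\neq0\), both extend uniquely to \(Z_N[\Delta_N^{-1}]\). They agree on the generators \(c_1,\dots,c_{4N-1}\) and \(\Delta_N^{-1}\), so they agree on the whole localization and in particular \(\chi=\chi'\).
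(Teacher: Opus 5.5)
Your proposal is correct and follows essentially the same route as the paper. You use the partial-fraction (moment) representation $\HC(b_r)=\sum_i A_i\nu_i^{\,r-1}$ over the nodes $\nu_i=\lambda_i(\lambda_i-1)$, factor the Hankel matrix as $V\,\mathrm{diag}(A_i)\,V^{T}$ to get $(-1)^{N+\binom N2}\prod_{i,j}(\nu_i-\mu_j)$ and then the stated product, and apply Cramer's rule on the Hankel system, lifted through the injective extension of $\HC$ to the localization. The differences are cosmetic: you recover the Perelomov--Popov product from the Newton identity rather than citing Proposition~\ref{prop:PP-product} directly, and you solve for the recurrence coefficients on the Harish-Chandra side before lifting them, whereas the paper solves centrally and then identifies $\HC(\rho_j)=\eta_j$ by uniqueness.
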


Up to the displayed sign, \(\HC(\Delta_N)\) is the resultant of the
numerator and denominator of the queer Perelomov--Popov product.  The
factor
\(\prod_{i,j}(\lambda_i+\lambda_j)\) cuts out the
failure-of-strong-typicality locus, including the hyperplanes
\(\lambda_i=0\), whereas
\(\prod_{i\ne j}(\lambda_i-\lambda_j-1)\) cuts out the shifted-resonance
hyperplanes \(\lambda_i-\lambda_j=1\); see
\cite[Section~2]{FriskMazorchuk2009} for the distinction between
typical and strongly typical weights.  Hence, on the complement of
these two loci, the numerator and denominator are coprime and the
Hankel system determining the order-\(N\) recurrence is nonsingular.
In this sense, \(\Delta_N\) is the natural resultant divisor governing
generic reconstruction.  Although the underlying rational-sequence
mechanism is classical, the resulting central determinant identities,
the explicit factorization of the queer Hankel divisor, and the
localized reconstruction statement appear to be new for \(\q_N\).

The paper is organized as follows.
Section~\ref{sec:preliminaries} fixes our conventions for the queer
center, briefly recalls the general-linear comparison, introduces
Sergeev's cyclic elements and Nazarov's one-row Capelli elements, and
records the generating identities of Ivanov and
Grigoryev--Nazarov.  Section~\ref{sec:newton} proves the central Newton
identity and its symmetric product forms.
Section~\ref{sec:coefficients} derives the Stirling-number recursion
and the index-triangular comparison of the two odd families; it also
gives an independent filtered proof that the odd one-row Capelli
elements generate the center.  Finally,
Section~\ref{sec:hankel-reconstruction} develops the fixed-rank theory,
including the central Hankel identities, the difference-determinant
identity in the Newton basis, the resultant factorization of
\(\Delta_N\), and the reconstruction of the localized center.

\section{Preliminaries on the queer center}
\label{sec:preliminaries}

This section fixes our conventions for the queer Lie superalgebra,
Sergeev's cyclic central elements, the Harish-Chandra isomorphism, and
Nazarov's one-row Capelli elements.  We also record the two known
generating identities that will be compared in
Section~\ref{sec:newton}: Ivanov's one-row factorial Schur
\(Q\)-identity and the Grigoryev-Nazarov Perelomov-Popov product.

\subsection{The queer Lie superalgebra and Sergeev's cyclic elements}
\label{sec:cyclic}

We now fix the realization of \(\q_N\), normalize Sergeev's cyclic
elements, and recall the Harish--Chandra description of the center.
These are known inputs for the identities proved later.  Throughout,
the center is taken in the superalgebra sense; all its elements are even
\cite[Section~1]{NazarovSergeev2006}, so it agrees here with the usual
center.

Let
\[
 I_N=\{-N,\ldots,-1,1,\ldots,N\},
 \qquad
 \bars i=\begin{cases}0,&i>0,\\1,&i<0.\end{cases}
\]
If \(E_{ij}\) are the matrix units of \(\gl(N|N)\), put
\[
 F_{ij}=E_{ij}+E_{-i,-j}.
\]
The queer Lie superalgebra \(\q_N\) is the fixed-point subalgebra of
the involution \(E_{ij}\mapsto E_{-i,-j}\).  It is spanned by the
\(F_{ij}\), subject to \(F_{-i,-j}=F_{ij}\), and
\begin{align}
 [F_{ij},F_{kl}]
 &=\delta_{kj}F_{il}
 -(-1)^{(\bars i+\bars j)(\bars k+\bars l)}\delta_{il}F_{kj}
 +\delta_{k,-j}F_{-i,l}\notag\\
 &\quad
 -(-1)^{(\bars i+\bars j)(\bars k+\bars l)}
  \delta_{-i,l}F_{k,-j}.
 \label{eq:q-bracket}
\end{align}

For the standard triangular decomposition, let
\[
 \mathfrak h_{\bar 0}
 =\bigoplus_{i=1}^N\C F_{ii},
 \qquad
 \mathfrak h_{\bar 1}
 =\bigoplus_{i=1}^N\C F_{i,-i},
 \qquad
 \mathfrak h=\mathfrak h_{\bar0}\oplus\mathfrak h_{\bar1}.
\]
The positive and negative nilpotent subalgebras are \[\mathfrak n_+
 =
 \operatorname{span}_{\C}
 \{F_{ij},F_{i,-j}:1\leq i<j\leq N\},
\qquad
\mathfrak n_-
 =
 \operatorname{span}_{\C}
 \{F_{ij},F_{i,-j}:1\leq j<i\leq N\}\]
so that
\[
 \mathfrak q_N
 =\mathfrak n_-\oplus\mathfrak h\oplus\mathfrak n_+.
\]
We use the corresponding standard Borel subalgebra
\(\mathfrak b=\mathfrak h\oplus\mathfrak n_+\).

Let \(\varepsilon_1,\ldots,\varepsilon_N\) be the basis of
\(\mathfrak h_{\bar0}^*\) determined by
\[
 \varepsilon_i(F_{jj})=\delta_{ij}.
\]
Accordingly, a weight
\(\lambda\in\mathfrak h_{\bar0}^*\) is written as
\[
 \lambda=\lambda_1\varepsilon_1+\cdots+\lambda_N\varepsilon_N,
 \qquad
 \lambda_i=\lambda(F_{ii}).
\]
The nonzero even and odd roots are both
\(\varepsilon_i-\varepsilon_j\), \(i\neq j\).  Hence the even and odd
contributions to the Weyl vector cancel, and no \(\rho\)-shift occurs
in the Harish-Chandra coordinates used below.

For \(r\geq1\), define
\begin{equation}
 C_{ij}^{(r)}=
 \sum_{k_1,\ldots,k_{r-1}\in I_N}
 (-1)^{\bars{k_1}+\cdots+\bars{k_{r-1}}}
 F_{ik_1}F_{k_1k_2}\cdots F_{k_{r-1}j}.
 \label{eq:q-auxiliary}
\end{equation}
Thus \(C_{ij}^{(1)}=F_{ij}\) and
\begin{equation}
 C_{ij}^{(r+1)}
 =\sum_{k\in I_N}(-1)^{\bars k}F_{ik}C_{kj}^{(r)}.
 \label{eq:q-auxiliary-recursion}
\end{equation}
Replacing every summation index \(k_a\) in
\eqref{eq:q-auxiliary} by \(-k_a\), and using
\(F_{-a,-b}=F_{ab}\) and \(\bars{-a}=1-\bars a\), gives
\begin{equation}
 C_{-i,-j}^{(r)}=(-1)^{r-1}C_{ij}^{(r)}.
 \label{eq:q-auxiliary-reflection}
\end{equation}
Indeed, the \(r-1\) intermediate indices change the sign exponent by
\(r-1\) modulo two, and the ordered product of the \(F\)'s is otherwise
unchanged.

The cyclic elements are
\begin{equation}
 c_r=\sum_{i\in I_N}C_{ii}^{(r)}
 =\sum_{i_1,\ldots,i_r\in I_N}
 (-1)^{\bars{i_2}+\cdots+\bars{i_r}}
 F_{i_1i_2}\cdots F_{i_ri_1}.
 \label{eq:q-cyclic}
\end{equation}

The construction is due to Sergeev \cite{Sergeev1983}.  The centrality
and parity relation are reviewed in \cite[Section~1.2]{GN2019}, while
generation by the odd-indexed family is
\cite[Proposition~1.1]{NazarovSergeev2006} (see also \cite[Theorem~5.3]{DP2025}).

\begin{proposition}
\label{prop:cyclic-center}
For every \(r\geq1\), the element \(c_r\) is central and
\(c_{2r}=0\).  Moreover,
\begin{equation}
 Z(\U(\q_N))=\C[c_1,c_3,c_5,\ldots].
 \label{eq:q-center-cyclic}
\end{equation}
The equality means algebra generation; it does not assert algebraic
independence of the displayed infinite family.
\end{proposition}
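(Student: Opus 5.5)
The statement has three parts: centrality of each \(c_r\), the vanishing \(c_{2r}=0\), and generation of \(Z(\U(\q_N))\) by the odd family. I would treat the first two by direct computation in \(\U(\q_N)\) and cite the third from the literature.

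\emph{Centrality.} For centrality I would use the matrix form. Introduce the generator matrix \(F=\sum_{i,j}E_{ij}\otimes F_{ij}(-1)^{\bars j}\), suitably twisted, so that \(C^{(r)}_{ij}\) is the \((i,j)\) entry of a power \(F^r\) with the signs absorbed, and \(c_r\) is a supertrace-type contraction. From \eqref{eq:q-bracket} I would derive the covariance relation
\[
[F_{ab},C^{(r)}_{ij}] = \delta_{ib}\,(\dots) - (\dots)\delta_{aj}+(\text{terms with } -b,\,-a),
\]
proving it by induction on \(r\) from the recursion \eqref{eq:q-auxiliary-recursion}. This says that the matrix \((C^{(r)}_{ij})\) transforms like the adjoint representation of \(\q_N\), i.e.\ like the generators \(F_{ij}\) themselves. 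Contracting with the invariant sign-weighted trace then gives \([F_{ab},c_r]=0\): the terms telescope after summing over \(i\), and the reflected terms cancel with the help of \eqref{eq:q-auxiliary-reflection}. The parity signs \((-1)^{(\bars i+\bars j)(\bars k+\bars l)}\) have to be tracked through the induction, and I expect this bookkeeping to be the main obstacle.

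\emph{Vanishing of even elements.} To prove \(c_{2r}=0\), I would split the sum \(c_r=\sum_{i>0}C_{ii}^{(r)}+\sum_{i>0}C_{-i,-i}^{(r)}\). By \eqref{eq:q-auxiliary-reflection}, this equals \((1+(-1)^{r-1})\sum_{i>0}C^{(r)}_{ii}\). That expression vanishes for even \(r\) and equals \(2\sum_{i>0}C^{(r)}_{ii}\) for odd \(r\).

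\emph{Generation.} For the generation statement \eqref{eq:q-center-cyclic}, I would invoke \cite[Proposition~1.1]{NazarovSergeev2006}. A self-contained route would use the Harish-Chandra isomorphism, whose image is the algebra of supersymmetric polynomials generated by the odd power sums \(\sum_i\lambda_i^{2k-1}\). One would check that the image \(\HC(c_{2k-1})\) has leading term a nonzero multiple of \(\sum_i\lambda_i^{2k-1}\) modulo lower degree. A triangular filtered argument then shows that the \(c_{2k-1}\) generate, and injectivity of \(\HC\) lifts the conclusion to \(Z_N\).
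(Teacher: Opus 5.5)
Your proposal is correct. It is more self-contained than the paper, which gives no argument of its own here: centrality and the parity relation are quoted from \cite{Sergeev1983} and \cite[Section~1.2]{GN2019}, and generation from \cite[Proposition~1.1]{NazarovSergeev2006}.

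\emph{Parity and generation.} Your generation step is the same citation. Your proof of \(c_{2r}=0\) is the computation the paper uses in Subsection~\ref{sec:pp} for odd indices: set \(j=i\) in \eqref{eq:q-auxiliary-reflection} and sum over \(I_N\).

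\emph{Centrality.} Your argument works, and the bookkeeping is lighter than you fear. Write \(C^{(r+1)}_{kl}=\sum_m(-1)^{\bars m}F_{km}C^{(r)}_{ml}\). Induction on \(r\) then gives
\[
 [F_{ab},C^{(r)}_{kl}]
 =\delta_{kb}C^{(r)}_{al}
 -\epsilon\,\delta_{al}C^{(r)}_{kb}
 +\delta_{k,-b}C^{(r)}_{-a,l}
 -\epsilon\,\delta_{-a,l}C^{(r)}_{k,-b},
 \qquad
 \epsilon=(-1)^{(\bars a+\bars b)(\bars k+\bars l)},
\]
that is, \eqref{eq:q-bracket} with \(F_{kl}\) replaced by \(C^{(r)}_{kl}\). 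In the inductive step, the internal terms at \(m=a\) and \(m=b\) cancel, as do those at \(m=-a\) and \(m=-b\), because \((\bars a-\bars b)(1+\bars a+\bars b)\) is always even. For \(k=l\) one has \(\epsilon=1\), so the unsigned trace in \eqref{eq:q-cyclic} gives \(C^{(r)}_{ab}-C^{(r)}_{ab}+C^{(r)}_{-a,-b}-C^{(r)}_{-a,-b}=0\) at once. Neither a sign-weighted trace nor \eqref{eq:q-auxiliary-reflection} is needed for centrality; the sign weight appears only if you twist the matrix by \((-1)^{\bars j}\).

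\emph{Harish-Chandra route.} Your optional route is also sound, and it is the cyclic counterpart of the paper's proof of Theorem~\ref{thm:direct-odd-generation}. Take logarithms in \eqref{eq:PP-z}. The coefficient of \(z^k\) in the logarithm of the right side is \(-2p_{2k-1}\) plus lower-degree terms, where \(p_{2k-1}=\sum_i\lambda_i^{2k-1}\). Products of two or more such coefficients contributing to \(z^k\) have degree at most \(2k-2\). Hence \(\HC(c_{2k-1})=2p_{2k-1}+(\text{lower degree})\), and the filtered induction runs verbatim. Alternatively, restricting the symbol of \(c_r\) to \(\mathfrak h_{\bar0}\) gives \((1+(-1)^{r-1})p_r\) directly.

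\emph{Comparison.} The citation route is shortest. Yours verifies centrality and parity in the paper's own sign normalization \eqref{eq:q-cyclic}. Via the Perelomov--Popov formula or the symbol, it also removes the dependence on \cite{NazarovSergeev2006} for generation. Within the paper, that independence is reached only indirectly, through Theorem~\ref{thm:direct-odd-generation} combined with \eqref{eq:finite-subalgebras}.
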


\subsection{Supersymmetric functions and the Harish--Chandra map}
We next recall the supersymmetric-function realization of the queer
center.  With respect to the triangular decomposition fixed above, the
Harish-Chandra homomorphism identifies a central element with a
polynomial in the even Cartan coordinates
\(\lambda_1,\ldots,\lambda_N\).  Its image is characterized by
symmetry and the cancellation condition recalled below; see \cite[Definitions~2.1--2.2]{Ivanov2005} and
\cite[Chapter~III, Section~8]{Macdonald1995}.

For each \(N\geq1\), let
\begin{equation}
 \Gamma_N=\left\{
 f\in\C[\lambda_1,\ldots,\lambda_N]^{\mathfrak S_N}:
 f(t,-t,\lambda_3,\ldots,\lambda_N)
 \text{ is independent of }t
 \right\}.
 \label{eq:GammaN}
\end{equation}
By symmetry, the same cancellation condition holds in every pair of
variables.

Let \(\Lambda\) be the graded algebra of symmetric functions over
\(\C\) in the variables \(x_1,x_2,\ldots\), and let
\[
\rho_N:\Lambda\longrightarrow
\mathbb{C}[x_1,\ldots,x_N]^{\mathfrak S_N}
\]
denote the specialization obtained by setting \(x_i=0\) for \(i>N\).
The stable algebra of supersymmetric functions is
\[
\Gamma=
\left\{
f\in\Lambda:
\rho_N(f)(t,-t,x_3,\ldots,x_N)
\text{ is independent of \(t\) for every \(N\geq2\)}
\right\}.
\]
Equivalently,
\[
\Gamma\simeq
\bigoplus_{d\geq0}\varprojlim_N(\Gamma_N)_d,
\]
where the transition map \(\Gamma_N\to\Gamma_{N-1}\) is obtained by
setting \(x_N=0\).  Thus an element of \(\Gamma\) is a compatible stable
family of finite-variable supersymmetric polynomials.

For the standard upper-triangular Borel subalgebra, the
Harish--Chandra homomorphism is an algebra isomorphism
\begin{equation}
 \HC:Z(\U(\q_N))\xrightarrow{\ \sim\ }\Gamma_N.
 \label{eq:q-HC}
\end{equation}
See \cite[Section~2.3]{ChengWang2012},
\cite{Sergeev1999}, and \cite[Equation~(5.4)]{KM2025}.  We extend
\(\HC\) coefficientwise to central formal Laurent series.  This
extension is injective, because a series has zero image only if the
Harish--Chandra image of every coefficient is zero.

\subsection{One-row Capelli elements and their generating series}
\label{sec:capelli}
We next recall Nazarov's one-row Capelli elements and their
Harish--Chandra images.  Ivanov's generating identity packages these
elements into the central series entering the Newton relation.  The
factorial Schur \(Q\)-function input comes from Ivanov
\cite{Ivanov2005}, while the precise normalization used below is
recorded by Kashuba and Molev \cite{KM2025}.

For \(r\geq0\), write
\[
 \fall{u}{r}=u(u-1)\cdots(u-r+1),
 \qquad \fall{u}{0}=1.
\]
Recall that a strict partition is a finite sequence
\[
 \lambda=(\lambda_1>\lambda_2>\cdots>\lambda_{\ell}>0).
\]
Its size and length are denoted by
\[
 |\lambda|=\lambda_1+\cdots+\lambda_{\ell},
 \qquad
 \ell(\lambda)=\ell.
\]
Let \(Q^-_\lambda\) denote the factorial Schur \(Q\)-polynomial which
Ivanov denotes by \(Q^*_\lambda\).  Equivalently, it is the
multiparameter Schur \(Q\)-polynomial for \(a_j=j-1\).  For every
strict partition \(\lambda\), its highest homogeneous component is the
ordinary Schur \(Q\)-polynomial \(Q_\lambda\) by
\cite[Proposition~2.11]{Ivanov2005}.  Ivanov's stable basis theorem
\cite[Proposition~2.12]{Ivanov2005}, followed by specialization to
\(N\) variables, shows that the polynomials \(Q^-_\lambda\) with
\(\ell(\lambda)\leq N\) form a vector-space basis of \(\Gamma_N\); see
also \cite[Section~3]{KM2025}.

Ivanov's one-row generating function is
\begin{equation}
 \sum_{r\geq0}\frac{Q^-_{(r)}(\lambda)}{\fall{u}{r}}
 =\prod_{i=1}^N\frac{u+1+\lambda_i}{u+1-\lambda_i}
 \quad\text{in }\Gamma_N[[u^{-1}]],
 \qquad Q^-_{(0)}=1.
 \label{eq:Ivanov-one-row}
\end{equation}
This is \cite[Corollary~8.3]{Ivanov2005}, specialized to \(N\)
variables.

Let \(C_\lambda\in Z(\U(\q_N))\) be Nazarov's queer Capelli element
indexed by a strict partition \(\lambda\), and let \(g_\lambda\) be
the number of standard shifted tableaux of shape \(\lambda\).
Kashuba--Molev record the normalization
\begin{equation}
 \frac{g_\lambda}{|\lambda|!}\HC(C_\lambda)=Q^-_\lambda;
 \label{eq:capelli-HC-general}
\end{equation}
see \cite[Remark~5.4, Equation~(5.5)]{KM2025} and compare
\cite[Corollary~1.5]{ASS2018}.  Since \(g_{(r)}=1\), define
\begin{equation}
 D_0=1,\qquad D_r=\frac{1}{r!}C_{(r)}\quad(r\geq1).
 \label{eq:D-definition}
\end{equation}
Then
\begin{equation}
 \HC(D_r)=Q^-_{(r)}.
 \label{eq:D-HC}
\end{equation}

Nazarov observed that the full one-row family \(C_{(r)}\) generates the
center \cite[after Proposition~4.8]{Nazarov1997}. In Subsection~\ref{subsec:odd-generation} (see Theorem~\ref{thm:direct-odd-generation}) we show directly
that the odd-indexed one-row elements suffice.  

For a commutative algebra \(A\), all formal Laurent series below are
expanded at \(u=\infty\).  Thus \(A[[u^{-1}]]\) is equipped with its
\(u^{-1}\)-adic topology.  A series with constant term \(1\) is
invertible in this algebra, and quotients of such series are understood
in this formal sense.

We now encode the full one-row family in its natural reciprocal
falling-factorial series:
\begin{equation}
 \calD_N(u)=\sum_{r\geq0}\frac{D_r}{\fall{u}{r}}
 \in Z(\U(\q_N))[[u^{-1}]].
 \label{eq:capelli-series}
\end{equation}
For any fixed power of \(u^{-1}\), only finitely many summands
contribute, so \eqref{eq:capelli-series} is well-defined.  Equations
\eqref{eq:Ivanov-one-row} and \eqref{eq:D-HC} give
\begin{equation}
 \HC(\calD_N(u))
 =\prod_{i=1}^N\frac{u+1+\lambda_i}{u+1-\lambda_i}.
 \label{eq:capelli-series-HC}
\end{equation}

The Harish--Chandra product has a simple symmetry under the affine
reflection \(u\mapsto -u-2\).  This symmetry will later convert the
shifted quotient in the Newton identity into a symmetric product.

\begin{proposition}
\label{prop:reflection}
In \(Z(\U(\q_N))[[u^{-1}]]\),
\begin{equation}
 \calD_N(u)\calD_N(-u-2)=1.
 \label{eq:reflection}
\end{equation}
\end{proposition}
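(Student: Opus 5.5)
The plan is to verify \eqref{eq:reflection} after applying the Harish--Chandra map coefficientwise, and then lift it using injectivity of the extended map.

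Before doing so, I must make sense of \(\calD_N(-u-2)\) as an element of \(Z(\U(\q_N))[[u^{-1}]]\). For each \(r\), the falling factorial \(\fall{-u-2}{r}=(-1)^r(u+2)(u+3)\cdots(u+r+1)\) is a polynomial in \(u\) of degree \(r\) with leading coefficient \((-1)^r\). Hence \(1/\fall{-u-2}{r}\) expands as a series in \(u^{-1}\) beginning in degree \(u^{-r}\). So the substitution \(u\mapsto -u-2\) is termwise well defined, and each power of \(u^{-1}\) receives only finitely many contributions. The product \(\calD_N(u)\calD_N(-u-2)\) is then a well-defined element of \(Z(\U(\q_N))[[u^{-1}]]\) with constant term \(1\).

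Next, \(\HC\) is an algebra homomorphism, extended coefficientwise. It therefore commutes with products of series and with the substitution \(u\mapsto -u-2\); the latter holds because the substitution only forms \(\Q\)-linear combinations of the central coefficients \(D_r\), with coefficients coming from the expansion of \(1/\fall{-u-2}{r}\). The one genuinely delicate point is that \eqref{eq:capelli-series-HC} is an identity in \(\Gamma_N[[u^{-1}]]\) whose right side is a rational function expanded at \(u=\infty\). I need the substitution \(u\mapsto -u-2\) applied to the left side to agree with the expansion at infinity of the substituted rational function. This holds because the substitution is a continuous ring map on the subring of series that arise as expansions of rational functions regular at infinity, and it sends \(u^{-1}\) to \(-(u+2)^{-1}=-u^{-1}(1+2u^{-1})^{-1}\), which has positive \(u\)-adic valuation. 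I expect this compatibility step to be the main thing to state carefully, though it is routine.

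With that in hand, the Harish--Chandra image of \(\calD_N(-u-2)\) is
\[
 \prod_{i=1}^N\frac{-u-1+\lambda_i}{-u-1-\lambda_i}
 =\prod_{i=1}^N\frac{u+1-\lambda_i}{u+1+\lambda_i}.
\]
Multiplying this by \eqref{eq:capelli-series-HC} gives \(1\) in \(\Gamma_N[[u^{-1}]]\). Thus every coefficient of \(\calD_N(u)\calD_N(-u-2)-1\) has zero Harish--Chandra image. By injectivity of \eqref{eq:q-HC}, applied coefficientwise, each such coefficient vanishes, which proves \eqref{eq:reflection}.
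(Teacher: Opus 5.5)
Your proposal is correct and matches the paper's proof: apply \(\HC\) coefficientwise, substitute \(u\mapsto -u-2\) in the Ivanov product to obtain the reciprocal product, and lift the resulting identity by injectivity of \(\HC\). Your added checks that the substitution is well defined in \(Z(\U(\q_N))[[u^{-1}]]\) and compatible with \(\HC\) and with expansion at infinity spell out what the paper leaves implicit.
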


\begin{proof}
Substitution of \(-u-2\) in \eqref{eq:capelli-series-HC} gives
\[
 \HC(\calD_N(-u-2))
 =\prod_{i=1}^N\frac{u+1-\lambda_i}{u+1+\lambda_i}
 =\HC(\calD_N(u))^{-1}.
\]
The constant term of \(\calD_N(u)\) is one, so the inverse exists.
Coefficientwise injectivity of \(\HC\) proves \eqref{eq:reflection}.
\end{proof}

\subsection{The Grigoryev--Nazarov product}
\label{sec:pp}

We conclude the preliminaries by recording the rational generating
formula for the Harish-Chandra images of Sergeev's cyclic elements.
With the normalization of \(c_r\) used in
\eqref{eq:q-cyclic}, this is precisely the queer
Perelomov--Popov formula obtained by Grigoryev and Nazarov from their
Harish--Chandra recurrence
\cite[Proposition~5 and Section~2.2]{GN2019}. We record it in the exact
form needed for the Newton identity.

Setting \(j=i\) in \eqref{eq:q-auxiliary-reflection} and summing over
\(i\in I_N\) gives \(\HC(c_{2m+1})=2\sum_{i=1}^N a_i^{(m)}\), where
\(a_i^{(m)}=\HC(C^{(2m+1)}_{ii})\); the recurrence for \(a_i^{(m)}\) is
\cite[Proposition~5]{GN2019}, and summing it yields the following proposition. 

\begin{proposition}
\label{prop:PP-product}
In \(\Gamma_N[[s^{-2}]]\), one has
\begin{equation}
 1-\sum_{m\geq0}\HC(c_{2m+1})s^{-2m-2}
 =
 \prod_{i=1}^N
 \frac{s^2-\lambda_i(\lambda_i+1)}
      {s^2-\lambda_i(\lambda_i-1)}.
 \label{eq:PP-product}
\end{equation}
Equivalently, for a formal variable \(z\),
\begin{equation}
 1-\sum_{r\geq1}\HC(c_{2r-1})z^r
 =
 \prod_{i=1}^N
 \frac{1-z\lambda_i(\lambda_i+1)}
      {1-z\lambda_i(\lambda_i-1)}.
 \label{eq:PP-z}
\end{equation}
\end{proposition}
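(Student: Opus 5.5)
The plan is to derive \eqref{eq:PP-product} from the Grigoryev--Nazarov recurrence \cite[Proposition~5]{GN2019} for the diagonal images \(a_i^{(m)}=\HC(C^{(2m+1)}_{ii})\), and then to pass to \eqref{eq:PP-z} by the substitution \(z=s^{-2}\). The identity \(\HC(c_{2m+1})=2\sum_i a_i^{(m)}\) is already in hand, so everything reduces to showing that \(2\sum_i\sum_m a_i^{(m)}s^{-2m-2}\) equals \(1-\prod_i\frac{s^2-\lambda_i(\lambda_i+1)}{s^2-\lambda_i(\lambda_i-1)}\).

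First, I would package the recurrence of \cite[Proposition~5]{GN2019} into generating functions \(A_i(s)=\sum_{m\ge0}a_i^{(m)}s^{-2m-2}\). That recurrence expresses \(a_i^{(m)}\) through \(\lambda_i\) and the \(a_j^{(m-1)}\) with \(j>i\), coming from the triangular structure of \(\mathfrak n_\pm\). The shift of the index by two comes from the two-step recursion \eqref{eq:q-auxiliary-recursion} combined with the reflection \eqref{eq:q-auxiliary-reflection}. It should therefore turn into a linear system of the form
\[
 \bigl(s^2-\lambda_i(\lambda_i-1)\bigr)A_i=\lambda_i+\sum_{j>i}(\text{coefficient})\,A_j .
\]
I would solve this system by downward induction on \(i\). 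The expected outcome is that the partial sums telescope:
\[
 1-2\sum_{j\ge i}A_j=\prod_{j\ge i}\frac{s^2-\lambda_j(\lambda_j+1)}{s^2-\lambda_j(\lambda_j-1)} .
\]
The base case \(i=N\) is the single-variable identity
\[
 1-\frac{2\lambda}{s^2-\lambda(\lambda-1)}=\frac{s^2-\lambda(\lambda+1)}{s^2-\lambda(\lambda-1)},
\]
which holds because \(\lambda(\lambda+1)-\lambda(\lambda-1)=2\lambda\). This is the template for the whole induction.

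The inductive step multiplies the product over \(j>i\) by the new factor for \(i\). I would verify that the cross terms in the recurrence for \(a_i^{(m)}\) supply exactly \(-2A_i\cdot\prod_{j>i}(\cdots)\), rather than a bare \(-2A_i\). This step is the main obstacle: it requires getting the normalization and sign conventions of \cite{GN2019} to match \eqref{eq:q-cyclic}. That includes the sign \((-1)^{\bars k}\), the factor \(2\) from \(\pm i\), and whether their Borel subalgebra is upper or lower triangular. The last point could exchange \(\lambda_i+1\) and \(\lambda_i-1\), or reverse the order of induction. I would calibrate all of these against \(N=1\) and against \(\HC(c_1)=2\sum\lambda_i\), which comes directly from \(c_1=\sum_{i\in I_N}F_{ii}\).

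Finally, both sides are power series in \(s^{-2}\) with constant term \(1\), so \eqref{eq:PP-z} follows from \eqref{eq:PP-product} by setting \(z=s^{-2}\), reindexing \(r=m+1\), and dividing the numerator and denominator of each factor by \(s^2\).
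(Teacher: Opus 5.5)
Your plan follows the same route as the paper. The paper likewise takes \(\HC(c_{2m+1})=2\sum_i a_i^{(m)}\) from the reflection identity, quotes the Grigoryev--Nazarov recurrence \cite[Proposition~5]{GN2019}, sums it over \(i\), and passes to \eqref{eq:PP-z} by \(z=s^{-2}\). For your inductive step, the telescoping you expect is equivalent to the system \((s^2-\lambda_i(\lambda_i-1))A_i=\lambda_i\bigl(1-2\sum_{j>i}A_j\bigr)\), so your unspecified coefficient should be \(-2\lambda_i\). This is consistent with a direct computation for \(N=2\), \(m=1\), which gives \(\HC(C^{(3)}_{11})=\lambda_1^3-\lambda_1^2-2\lambda_1\lambda_2\).
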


\section{The Newton identity and its product forms}
\label{sec:newton}

We now compare the two Harish--Chandra products recalled in
Subsections~\ref{sec:capelli} and~\ref{sec:pp}.  The quotient of
Ivanov's one-row product at consecutive arguments agrees with the
Grigoryev--Nazarov product after the quadratic substitution
\(z=[u(u+1)]^{-1}\).  Injectivity of the Harish--Chandra homomorphism
then lifts this equality to the completed center.  The reflection
identity subsequently gives the symmetric product forms.  All series
in this section are expanded at infinity; since \(\calD_N(u)\) has
constant term one, it is invertible in the \(u^{-1}\)-adic completion.

\begin{theorem}
\label{thm:queer-newton}
In \(Z(\U(\q_N))[[u^{-1}]]\),
\begin{equation}
 \boxed{
 \frac{\calD_N(u)}{\calD_N(u-1)}
 =1-\sum_{r\geq1}\frac{c_{2r-1}}{[u(u+1)]^r}.}
 \label{eq:queer-newton}
\end{equation}
\end{theorem}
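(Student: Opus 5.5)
Both sides of \eqref{eq:queer-newton} are series in \(Z(\U(\q_N))[[u^{-1}]]\), and the coefficientwise extension of \(\HC\) is injective. It therefore suffices to check the identity after applying \(\HC\), where both sides become explicit rational products in \(\Gamma_N[[u^{-1}]]\).

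First, the left-hand side. Since \(\HC\) is an algebra homomorphism applied coefficientwise, it commutes with the substitution \(u\mapsto u-1\) and with inversion of series with constant term one. (Note that \(\calD_N(u-1)\) is a well-defined element of \(Z[[u^{-1}]]\), because \(1/\fall{u-1}{r}\) re-expands in \(u^{-1}\) with leading term \(u^{-r}\).) By \eqref{eq:capelli-series-HC},
\[
 \HC\Bigl(\frac{\calD_N(u)}{\calD_N(u-1)}\Bigr)
 =\prod_{i=1}^N\frac{u+1+\lambda_i}{u+1-\lambda_i}\cdot
   \frac{u-\lambda_i}{u+\lambda_i}.
\]
Each factor simplifies, using \((u+1+\lambda)(u-\lambda)=u(u+1)-\lambda(\lambda+1)\) and \((u+1-\lambda)(u+\lambda)=u(u+1)-\lambda(\lambda-1)\):
\[
 \frac{(u+1+\lambda_i)(u-\lambda_i)}{(u+1-\lambda_i)(u+\lambda_i)}
 =\frac{1-z\lambda_i(\lambda_i+1)}{1-z\lambda_i(\lambda_i-1)},
 \qquad z=[u(u+1)]^{-1}.
\]

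Second, the right-hand side. By \eqref{eq:PP-z}, the \(\HC\)-image of the right-hand side of \eqref{eq:queer-newton} is exactly this product in the variable \(z\). What remains is to justify the substitution \(z=[u(u+1)]^{-1}\). This substitution is a continuous algebra homomorphism \(\Gamma_N[[z]]\to\Gamma_N[[u^{-1}]]\), since \(z\) maps to a series of \(u^{-1}\)-order \(2\). It therefore respects products and inverses, and both displayed products map to the same series.

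The main obstacle is this justification of formal manipulations. Rational-function identities must be transported into expansions at \(u=\infty\). I would argue that every factor involved has constant term \(1\) in \(\Gamma_N[[u^{-1}]]\), so the equality of rational functions in \(\Gamma_N(u)\) implies equality of their unique expansions at infinity. Once that is settled, coefficientwise injectivity of \(\HC\) gives \eqref{eq:queer-newton}.
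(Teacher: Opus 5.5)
Your proposal is correct and follows the paper's proof essentially step for step: apply \(\HC\) coefficientwise, rewrite each factor of the quotient of Ivanov's products as \(\frac{u(u+1)-\lambda_i(\lambda_i+1)}{u(u+1)-\lambda_i(\lambda_i-1)}\), match this with the Grigoryev--Nazarov product \eqref{eq:PP-z} under the \(u^{-1}\)-adically continuous substitution \(z=[u(u+1)]^{-1}\), and conclude by coefficientwise injectivity of \(\HC\). One small imprecision, which does not affect the argument: the individual factors live in \(\C[\lambda_1,\ldots,\lambda_N][[u^{-1}]]\) rather than in \(\Gamma_N[[u^{-1}]]\), since only the full symmetric products lie in \(\Gamma_N[[u^{-1}]]\).
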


\begin{proof}
Applying \(\HC\) to the left side and using
\eqref{eq:capelli-series-HC}, we obtain
\begin{align}
 \HC\left(\frac{\calD_N(u)}{\calD_N(u-1)}\right)
 &=\prod_{i=1}^N
 \frac{u+1+\lambda_i}{u+1-\lambda_i}
 \frac{u-\lambda_i}{u+\lambda_i}\notag\\
 &=\prod_{i=1}^N
 \frac{u(u+1)-\lambda_i(\lambda_i+1)}
      {u(u+1)-\lambda_i(\lambda_i-1)}.
 \label{eq:HC-newton-left}
\end{align}
In the formal identity \eqref{eq:PP-z}, substitute
\[
 z=[u(u+1)]^{-1}
 =u^{-2}(1+u^{-1})^{-1}\in u^{-2}\C[[u^{-1}]].
\]
The substitution is legitimate in the \(u^{-1}\)-adic topology:
the coefficient of any fixed power of \(u^{-1}\) receives
contributions from only finitely many powers of \(z\).  It gives
\begin{align}
 &1-\sum_{r\geq1}
 \HC(c_{2r-1})[u(u+1)]^{-r}\notag\\
 &\qquad=\prod_{i=1}^N
 \frac{u(u+1)-\lambda_i(\lambda_i+1)}
      {u(u+1)-\lambda_i(\lambda_i-1)}.
 \label{eq:HC-newton-right}
\end{align}
The right sides of \eqref{eq:HC-newton-left} and
\eqref{eq:HC-newton-right} coincide.  Thus the two sides of
\eqref{eq:queer-newton} have the same Harish--Chandra image.
Coefficientwise injectivity of \(\HC\) proves the theorem.
\end{proof}

The reflection identity turns the quotient into a symmetric product.

\begin{corollary}
\label{cor:newton-product}
In \(Z(\U(\q_N))[[u^{-1}]]\),
\begin{equation}
 \boxed{
 \calD_N(u)\calD_N(-u-1)
 =1-\sum_{r\geq1}\frac{c_{2r-1}}{[u(u+1)]^r}.}
 \label{eq:newton-product}
\end{equation}
\end{corollary}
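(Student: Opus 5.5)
The plan is to deduce the corollary from Theorem~\ref{thm:queer-newton} and the reflection identity, Proposition~\ref{prop:reflection}. No new Harish--Chandra computation is needed. Substituting \(u\mapsto u-1\) in \eqref{eq:reflection} gives
\[
 \calD_N(u-1)\,\calD_N(-(u-1)-2)=\calD_N(u-1)\,\calD_N(-u-1)=1,
\]
so \(\calD_N(u-1)^{-1}=\calD_N(-u-1)\). Multiplying \(\calD_N(u)\) by this inverse turns the left side of \eqref{eq:queer-newton} into \(\calD_N(u)\calD_N(-u-1)\). This is exactly the left side of \eqref{eq:newton-product}, and the right sides agree verbatim.

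The one point needing care is that the substitutions \(u\mapsto u-1\) and \(u\mapsto -u-1\) make sense in \(Z(\U(\q_N))[[u^{-1}]]\) and respect products, so that the identity \eqref{eq:reflection} survives them.

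\begin{itemize}
\item Write \((u+a)^{-1}=u^{-1}(1+au^{-1})^{-1}\), which lies in \(u^{-1}\C[[u^{-1}]]\).
\item Consequently \(u\mapsto \pm u+a\) defines a continuous algebra endomorphism of \(A[[u^{-1}]]\) for any commutative \(A\).
\item Each \(1/\fall{u}{r}\) lies in \(u^{-r}\C[[u^{-1}]]\), so each coefficient of \(\calD_N(\pm u+a)\) involves only finitely many \(D_r\).
\end{itemize}

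Since the substitution is a ring homomorphism, it preserves the identity \eqref{eq:reflection}.

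As a cross-check, one can instead apply \(\HC\) directly. Equation \eqref{eq:capelli-series-HC} at \(-u-1\) gives
\[
 \prod_i\frac{u-\lambda_i}{u+\lambda_i},
\]
which matches \eqref{eq:HC-newton-left}. Injectivity of \(\HC\) then concludes the argument. I expect no real obstacle; the only step needing attention is the legitimacy of the affine substitution in the completed ring.
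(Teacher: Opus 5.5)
Your proposal is correct and matches the paper's proof: substitute \(u-1\) for \(u\) in the reflection identity to get \(\calD_N(u-1)^{-1}=\calD_N(-u-1)\), then insert this into the left side of the Newton identity. Your justification that affine substitutions are legitimate in the completed ring, and your Harish--Chandra cross-check, are both sound, though they go beyond what the paper spells out.
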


\begin{proof}
Apply Proposition~\ref{prop:reflection} with \(u-1\) in place of
\(u\).  It gives
\[
 \calD_N(u-1)\calD_N(-u-1)=1,
\]
and therefore
\(\calD_N(u-1)^{-1}=\calD_N(-u-1)\).  Substitute this equality in
the left side of \eqref{eq:queer-newton}.
\end{proof}

A shift by one half converts the quadratic parameter \(u(u+1)\) into
a square and gives a more symmetric form of the identity.

\begin{corollary}
\label{cor:symmetric-parameter}
Let
\[
 v=s\left(1+\frac{1}{4s^2}\right)^{1/2}
 =s+\frac{1}{8s}-\frac{1}{128s^3}+\cdots,
 \qquad v^2=s^2+\frac14.
\]
Then, in \(Z(\U(\q_N))[[s^{-1}]]\),
\begin{equation}
 \boxed{
 1-\sum_{m\geq0}\frac{c_{2m+1}}{s^{2m+2}}
 =\calD_N\left(v-\tfrac12\right)
  \calD_N\left(-v-\tfrac12\right).}
 \label{eq:symmetric-parameter}
\end{equation}
\end{corollary}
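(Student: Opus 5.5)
The idea is to substitute \(u=v-\tfrac12\) into Corollary~\ref{cor:newton-product}.  The only work is to make this substitution legitimate in the formal completions and to check that the quadratic parameter becomes \(s^2\).

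\textbf{Step 1: the series \(v\).}  Define \(v=s(1+\tfrac{1}{4s^2})^{1/2}\) by the binomial series, so
\[
 v\in s+s^{-1}\Q[[s^{-2}]],\qquad v^2=s^2+\tfrac14 .
\]
Put \(u=v-\tfrac12\).  Then \(u\in s+\Q[[s^{-1}]]\), with leading term \(s\).  A direct computation gives
\[
 u(u+1)=\bigl(v-\tfrac12\bigr)\bigl(v+\tfrac12\bigr)=v^2-\tfrac14=s^2 .
\]
Also \(-u-1=-v-\tfrac12\).

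\textbf{Step 2: the substitution is well defined.}  Since \(u=s(1+w)\) with \(w\in s^{-1}\Q[[s^{-1}]]\), we have
\[
 u^{-1}=s^{-1}(1+w)^{-1}\in s^{-1}\Q[[s^{-1}]] .
\]
Substituting a series in \(s^{-1}\Q[[s^{-1}]]\) for \(u^{-1}\) in \(Z(\U(\q_N))[[u^{-1}]]\) is a continuous ring homomorphism into \(Z(\U(\q_N))[[s^{-1}]]\).  Each fixed power of \(s^{-1}\) receives contributions from only finitely many powers of \(u^{-1}\).

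The same holds for \(-u-1=-s(1+w')\), because its inverse also lies in \(s^{-1}\Q[[s^{-1}]]\).  For the left factor, \(\calD_N(-u-1)\) is by definition the image of the series \(\calD_N(u)\in Z[[u^{-1}]]\) under \(u^{-1}\mapsto(-u-1)^{-1}\).  Composing the two substitutions is therefore consistent.  Concretely, \(\calD_N(-v-\tfrac12)\) means \(\sum_r D_r/\fall{-v-\frac12}{r}\), and each \(1/\fall{-v-\frac12}{r}\) lies in \(s^{-r}\Q[[s^{-1}]]\).  This is the only place where any care is needed, and it is routine.

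\textbf{Step 3: conclude.}  Applying this homomorphism to both sides of \eqref{eq:newton-product} turns the left side into \(\calD_N(v-\tfrac12)\calD_N(-v-\tfrac12)\).  Since \(u(u+1)=s^2\), the right side becomes
\[
 1-\sum_{r\geq1}c_{2r-1}s^{-2r}.
\]
Reindexing with \(r=m+1\) gives \eqref{eq:symmetric-parameter}.

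\textbf{Alternative check.}  One can instead apply \(\HC\) directly.  Using \eqref{eq:capelli-series-HC},
\[
 \HC\Bigl(\calD_N\bigl(v-\tfrac12\bigr)\calD_N\bigl(-v-\tfrac12\bigr)\Bigr)
 =\prod_{i=1}^N\frac{(v+\tfrac12+\lambda_i)(v-\tfrac12+\lambda_i)}{(v+\tfrac12-\lambda_i)(v-\tfrac12-\lambda_i)}
 =\prod_{i=1}^N\frac{s^2-\lambda_i(\lambda_i+1)}{s^2-\lambda_i(\lambda_i-1)} ,
\]
because \(v^2-\tfrac14=s^2\).  This matches \eqref{eq:PP-product}, and coefficientwise injectivity of \(\HC\) gives the same conclusion.
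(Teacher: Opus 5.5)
Your main argument is the paper's proof: set \(u=v-\tfrac12\) in Corollary~\ref{cor:newton-product} and use \(u(u+1)=v^2-\tfrac14=s^2\) and \(-u-1=-v-\tfrac12\). Your justification of the substitution in the \(s^{-1}\)-adic completion is correct, and the paper does not spell it out.

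Your optional \(\HC\) check has a slip: you inverted the second factor. The correct contribution of \(\HC(\calD_N(-v-\tfrac12))\) is \((v-\tfrac12-\lambda_i)/(v-\tfrac12+\lambda_i)\). Only with that factor do numerator and denominator combine to \(v^2-(\lambda_i\pm\tfrac12)^2=s^2-\lambda_i(\lambda_i\pm1)\). As you wrote it, the middle expression equals \(\bigl((v+\lambda_i)^2-\tfrac14\bigr)/\bigl((v-\lambda_i)^2-\tfrac14\bigr)\), which is not the claimed product.
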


\begin{proof}
Set \(u=v-\tfrac12\) in \eqref{eq:newton-product}.  Since
\[
 u(u+1)=v^2-\frac14=s^2,
 \qquad -u-1=-v-\frac12,
\]
the result is exactly \eqref{eq:symmetric-parameter}.
\end{proof}

\section{Coefficient recursion and triangular consequences}
\label{sec:coefficients}

This section extracts ordinary Laurent coefficients from the Newton
identity.  Reciprocal falling factorials are converted to powers of
\(u^{-1}\) by Stirling numbers, producing an explicit recursion and the
triangular change of generators stated in the introduction.

\subsection{Stirling expansion and triangularity}
\label{subsec:stirling-triangularity} For \(n,r\geq0\), the Stirling number of the second kind \(S(n,r)\) is
the number of partitions of \(\{1,\ldots,n\}\) into \(r\) nonempty
blocks. We use the conventions
\[
S(0,0)=1,\qquad
S(n,0)=0\quad(n>0),\qquad
S(n,r)=0\quad\text{if }r<0\text{ or }r>n.
\]
They satisfy
\[
S(n,r)=S(n-1,r-1)+rS(n-1,r)
\qquad(n,r\geq1),
\]
and their ordinary generating function is
\[
\sum_{n\geq r}S(n,r)z^n
=
\frac{z^r}{(1-z)(1-2z)\cdots(1-rz)}.
\]

Set
\begin{equation}
 \beta_0=1,
 \qquad
 \beta_j=\sum_{k=1}^jS(j,k)D_k\quad(j\geq1).
 \label{eq:beta-definition}
\end{equation}

We now extract the coefficient of \(u^{-n}\) from the Newton identity.
The resulting recursion gives an explicit comparison between the
Capelli coefficients and the odd cyclic elements.

\begin{proposition}
\label{prop:coefficient-recursion}
For every \(n\geq1\),
\begin{equation}
 \boxed{
 \sum_{j=1}^{n-1}jS(n-1,j)D_j
 =\sum_{r=1}^{\lfloor n/2\rfloor}\ \sum_{m=2r}^{n}
 (-1)^m\binom{m-r-1}{r-1}
 \beta_{n-m}c_{2r-1}.}
 \label{eq:coefficient-recursion}
\end{equation}
For \(n=1\), both sides are zero.
\end{proposition}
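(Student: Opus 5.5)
The plan is to rewrite the Newton identity of Theorem~\ref{thm:queer-newton} in the additive form
\[
 \calD_N(u)-\calD_N(u-1)
 =-\,\calD_N(u-1)\sum_{r\geq1}\frac{c_{2r-1}}{[u(u+1)]^r},
\]
obtained by multiplying \eqref{eq:queer-newton} by \(\calD_N(u-1)\). We then compare the coefficients of \(u^{-n}\) on both sides. Everything takes place in \(Z(\U(\q_N))[[u^{-1}]]\), and the coefficients \(D_k\), \(c_{2r-1}\) commute, so this is a purely formal computation with three expansions. Expect the overall sign \(-1\) to appear on both sides, so that it cancels.

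\emph{Left side.} Use the falling-factorial difference
\[
 \frac{1}{\fall{u}{k}}-\frac{1}{\fall{(u-1)}{k}}
 =\frac{(u-k)-u}{\fall{u}{k+1}}
 =-\frac{k}{\fall{u}{k+1}}.
\]
This gives \(\calD_N(u)-\calD_N(u-1)=-\sum_{k\geq1}kD_k/\fall{u}{k+1}\). Next expand \(1/\fall{u}{k+1}\) with the Stirling generating function at \(z=u^{-1}\):
\[
 \frac{1}{\fall{u}{k+1}}=\frac{z}{1-z}\cdot\frac{z^k}{(1-z)\cdots(1-kz)}\cdot(1-z)\cdots\,,
\]
or, more directly, \(\frac{1}{\fall{u}{k+1}}=z\cdot\frac{z^{k}}{(1-z)(1-2z)\cdots(1-kz)}=\sum_{n}S(n-1,k)u^{-n}\). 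Hence the coefficient of \(u^{-n}\) on the left is \(-\sum_{j=1}^{n-1}jS(n-1,j)D_j\).

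\emph{Right side.} First, \(\frac{1}{\fall{(u-1)}{k}}=\frac{z^k}{(1-z)\cdots(1-kz)}=\sum_{j}S(j,k)u^{-j}\). Therefore \(\calD_N(u-1)=\sum_{j\geq0}\beta_j u^{-j}\), with \(\beta_j\) as in \eqref{eq:beta-definition}. Second,
\[
 [u(u+1)]^{-r}=u^{-2r}(1+u^{-1})^{-r}
 =\sum_{t\geq0}(-1)^t\binom{r+t-1}{t}u^{-2r-t}.
\]
Put \(m=2r+t\). Then \(\binom{r+t-1}{t}=\binom{m-r-1}{m-2r}=\binom{m-r-1}{r-1}\) and \((-1)^t=(-1)^m\). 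Multiplying the two series and collecting the coefficient of \(u^{-n}\) gives
\[
 -\sum_{r}\sum_{m=2r}^{n}(-1)^m\binom{m-r-1}{r-1}\beta_{n-m}c_{2r-1},
\]
where \(r\le\lfloor n/2\rfloor\) because \(m\ge 2r\). Cancelling the common minus sign yields \eqref{eq:coefficient-recursion}. For \(n=1\), both sums are empty.

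The only delicate step is bookkeeping the Stirling shifts. Specifically, one must check that \(1/\fall{u}{k+1}\) produces \(S(n-1,k)\) while \(1/\fall{(u-1)}{k}\) produces \(S(j,k)\) with the conventions \(S(0,0)=1\) and \(S(n,0)=0\). I would verify both expansions first, for example from the stated generating function \(\sum_n S(n,r)z^n=z^r/\prod_{i\le r}(1-iz)\), and check them on \(n=2,3\) as a sanity test.
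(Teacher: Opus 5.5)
Your proof is correct and follows the paper's argument: multiply the Newton identity by \(\calD_N(u-1)\), expand using the Stirling generating function and the binomial series for \([u(u+1)]^{-r}\), and compare coefficients of \(u^{-n}\). The only difference is cosmetic. You subtract before expanding, using \(1/\fall{u}{k}-1/\fall{u-1}{k}=-k/\fall{u}{k+1}\), whereas the paper expands \(\alpha_n\) and \(\beta_n\) separately and subtracts via \(S(n,j)-S(n-1,j-1)=jS(n-1,j)\); you should also delete your garbled first display for \(1/\fall{u}{k+1}\) and keep only the ``more directly'' version.
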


\begin{proof}
We first establish the three Laurent expansions used in the
coefficient comparison.  For \(r\geq1\),
\begin{align}
 \frac{1}{\fall{u}{r}}
 &=u^{-r}\prod_{a=1}^{r-1}(1-au^{-1})^{-1}
 =\sum_{n\geq r}S(n-1,r-1)u^{-n},
 \label{eq:falling-u-expansion}\\
 \frac{1}{\fall{u-1}{r}}
 &=u^{-r}\prod_{a=1}^{r}(1-au^{-1})^{-1}
 =\sum_{n\geq r}S(n,r)u^{-n}.
 \label{eq:falling-u-minus-one-expansion}
\end{align}
The last equalities follow, for example, by induction on \(r\) from
the recurrence for \(S(n,r)\); equivalently, they are the ordinary
generating functions for the Stirling numbers.  Here
\(\fall{u-1}{r}=(u-1)(u-2)\cdots(u-r)\).

Write
\[
 \calD_N(u)=\sum_{n\geq0}\alpha_nu^{-n},
 \qquad
 \calD_N(u-1)=\sum_{n\geq0}\beta_nu^{-n}.
\]
Equations \eqref{eq:falling-u-expansion} and
\eqref{eq:falling-u-minus-one-expansion} give
\begin{equation}
 \alpha_0=\beta_0=1,
 \qquad
 \alpha_n=\sum_{j=1}^nS(n-1,j-1)D_j,
 \qquad
 \beta_n=\sum_{j=1}^nS(n,j)D_j.
 \label{eq:alpha-beta}
\end{equation}
The last expression agrees with \eqref{eq:beta-definition}.  Using
the Stirling recurrence in \eqref{eq:alpha-beta}, we obtain
\begin{equation}
 \beta_n-\alpha_n
 =\sum_{j=1}^{n-1}jS(n-1,j)D_j.
 \label{eq:beta-minus-alpha}
\end{equation}

The binomial series supplies the third expansion:
\begin{align}
 [u(u+1)]^{-r}
 &=u^{-2r}(1+u^{-1})^{-r}\notag\\
 &=\sum_{m\geq2r}(-1)^m
   \binom{m-r-1}{r-1}u^{-m}.
 \label{eq:quadratic-expansion}
\end{align}
Indeed, on writing \(m=2r+k\), the coefficient becomes
\((-1)^k\binom{r+k-1}{k}\), which equals the coefficient displayed
in \eqref{eq:quadratic-expansion} because \(2r\) is even and
\(\binom{r+k-1}{k}=\binom{m-r-1}{r-1}\).

Multiply \eqref{eq:queer-newton} by \(\calD_N(u-1)\):
\begin{equation}
 \calD_N(u)=\calD_N(u-1)
 \left(1-\sum_{r\geq1}c_{2r-1}[u(u+1)]^{-r}\right).
 \label{eq:newton-multiplied}
\end{equation}
Using \eqref{eq:quadratic-expansion}, the coefficient of \(u^{-n}\)
on the right side of \eqref{eq:newton-multiplied} is
\[
 \beta_n-
 \sum_{r=1}^{\lfloor n/2\rfloor}\ \sum_{m=2r}^{n}
 (-1)^m\binom{m-r-1}{r-1}\beta_{n-m}c_{2r-1}.
\]
Equate this with the coefficient \(\alpha_n\) on the left and apply
\eqref{eq:beta-minus-alpha}.  This proves
\eqref{eq:coefficient-recursion}.
\end{proof}

The recursion is triangular with respect to increasing index.  Its
even and odd specializations successively eliminate the even Capelli
elements and compare the leading terms of the two odd families.

\begin{theorem}
\label{thm:triangularity}
For every \(p\geq1\),
\begin{align}
 D_{2p}&\in\Q[D_1,D_3,\ldots,D_{2p-1}],
 \label{eq:even-D-redundant}\\
 c_{2p-1}&=(2p-1)D_{2p-1}
 +P_p(D_1,D_3,\ldots,D_{2p-3})
 \label{eq:c-D-triangular}
\end{align}
for a polynomial \(P_p\) with rational coefficients.  Consequently,
for every \(p\geq1\),
\begin{equation}
 \C[c_1,c_3,\ldots,c_{2p-1}]
 =\C[D_1,D_3,\ldots,D_{2p-1}].
 \label{eq:finite-subalgebras}
\end{equation}
Thus the two odd-indexed families are related by a triangular change
of algebra generators with respect to increasing index. 
\end{theorem}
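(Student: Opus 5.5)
The plan is to prove both \eqref{eq:even-D-redundant} and \eqref{eq:c-D-triangular} by a single induction on \(p\). The input is the coefficient recursion \eqref{eq:coefficient-recursion} of Proposition~\ref{prop:coefficient-recursion}, applied at the two consecutive indices \(n=2p\) and \(n=2p+1\). The induction hypothesis at stage \(p\) will be that, for all \(q<p\),
\[
D_{2q}\in\Q[D_1,D_3,\ldots,D_{2q-1}]
\quad\text{and}\quad
c_{2q-1}\in\Q[D_1,D_3,\ldots,D_{2q-1}].
\]
From this it follows that
\[
\Q[D_1,D_2,\ldots,D_{2p-2},c_1,c_3,\ldots,c_{2p-3}]\subseteq\Q[D_1,D_3,\ldots,D_{2p-3}].
\]

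The first step is to record the index bookkeeping in \eqref{eq:coefficient-recursion}. On the left side the top term is \((n-1)S(n-1,n-1)D_{n-1}=(n-1)D_{n-1}\); every other term involves only \(D_j\) with \(j\le n-2\). On the right side, a summand indexed by \((r,m)\) is \(c_{2r-1}\beta_{n-m}\), and \(\beta_{n-m}\) involves only \(D_k\) with \(k\le n-m\le n-2r\). Two cases follow.

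\begin{itemize}
\item For \(n=2p\), the only summand containing \(c_{2p-1}\) is \(r=p\), \(m=2p\). Its coefficient is \((-1)^{2p}\binom{p-1}{p-1}\beta_0=1\). All other summands have \(r<p\), so they involve \(c_{2r-1}\) with \(r\le p-1\) and \(D_k\) with \(k\le 2p-2\). Hence \(c_{2p-1}=(2p-1)D_{2p-1}+(\text{polynomial in }D_1,\ldots,D_{2p-2},c_1,\ldots,c_{2p-3})\), and the induction hypothesis rewrites the correction as \(P_p(D_1,D_3,\ldots,D_{2p-3})\). This gives \eqref{eq:c-D-triangular}.
\item For \(n=2p+1\), the left side is \(2pD_{2p}\) plus terms in \(D_j\) with \(j\le 2p-1\). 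On the right side we have \(r\le p\), so the summands involve \(c_1,\ldots,c_{2p-1}\) and \(D_k\) with \(k\le 2p+1-2r\le 2p-1\). Dividing by \(2p\), and substituting the expression for \(c_{2p-1}\) just obtained together with the induction hypothesis, gives \(D_{2p}\in\Q[D_1,D_3,\ldots,D_{2p-1}]\). This gives \eqref{eq:even-D-redundant} and closes the induction.
\end{itemize}

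For \eqref{eq:finite-subalgebras}, the inclusion \(\subseteq\) is immediate from \eqref{eq:c-D-triangular}. For the reverse inclusion I will solve \eqref{eq:c-D-triangular} for \(D_{2p-1}=\bigl(c_{2p-1}-P_p\bigr)/(2p-1)\) and induct on \(p\), using \(2p-1\neq0\). This unitriangular-up-to-scalars inversion shows \(D_{2q-1}\in\C[c_1,\ldots,c_{2q-1}]\) for all \(q\le p\).

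The only delicate point I anticipate is the bookkeeping in the first step. In particular, one must check that in the \(n=2p\) case no summand with \(r<p\) can produce \(D_{2p-1}\). This holds because such a summand forces \(n-m\le 2p-2r\le 2p-2\). One must also check that the even-index \(D\)'s appearing through the \(\beta\)'s are always of index below the current one, so that the induction hypothesis applies. Beyond this, the argument is formal.
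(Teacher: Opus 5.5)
Your proposal is correct and follows the paper's proof essentially step for step. You run a simultaneous induction that uses \eqref{eq:coefficient-recursion} at \(n=2p\) to isolate \(c_{2p-1}\) (coefficient \(1\)) against \((2p-1)D_{2p-1}\), and at \(n=2p+1\) to isolate \(2pD_{2p}\), then invert triangularly for \eqref{eq:finite-subalgebras}; the only cosmetic difference is that the paper writes out the base case \(n=2,3\) explicitly, which your general step also covers at \(p=1\) with vacuous hypothesis.
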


\begin{proof}
We prove \eqref{eq:even-D-redundant} and
\eqref{eq:c-D-triangular} simultaneously by induction on \(p\).
For \(n=2\), recursion \eqref{eq:coefficient-recursion} reads
\[
 D_1=c_1.
\]
For \(n=3\), it reads
\[
 D_1+2D_2=c_1(D_1-1).
\]
Hence
\begin{equation}
 c_1=D_1,
 \qquad
 D_2=\frac12D_1^2-D_1,
 \label{eq:base-low}
\end{equation}
which proves both assertions for \(p=1\).

Assume they have been proved through \(p-1\).  Put \(n=2p\) in
\eqref{eq:coefficient-recursion}.  On the right side, the only term
involving \(c_{2p-1}\) has \(r=p\) and \(m=2p\), and its coefficient is
\[
 (-1)^{2p}\binom{p-1}{p-1}\beta_0=1.
\]
On the left side, the coefficient of \(D_{2p-1}\) is
\[
 (2p-1)S(2p-1,2p-1)=2p-1.
\]
Every remaining \(D_j\) has \(j\leq2p-2\), and every remaining cyclic
element is among \(c_1,c_3,\ldots,c_{2p-3}\).  The induction
hypotheses replace all lower even \(D_j\)'s and lower cyclic elements
by polynomials in \(D_1,D_3,\ldots,D_{2p-3}\).  Solving for
\(c_{2p-1}\) proves \eqref{eq:c-D-triangular} at index \(p\).

Next put \(n=2p+1\).  The coefficient of \(D_{2p}\) on the left side
of \eqref{eq:coefficient-recursion} is
\[
 2pS(2p,2p)=2p.
\]
Every other \(D_j\) there has \(j\leq2p-1\).  On the right side one
has \(r\leq p\), so only \(c_1,c_3,\ldots,c_{2p-1}\) occur.  Moreover,
\(m\geq2\), and therefore the factor \(\beta_{2p+1-m}\) involves only
\(D_1,\ldots,D_{2p-1}\).  The induction hypotheses, together with the
newly established formula for \(c_{2p-1}\), express every term other
than \(2pD_{2p}\) as a polynomial in
\(D_1,D_3,\ldots,D_{2p-1}\) with rational coefficients.  Dividing by
\(2p\) proves \eqref{eq:even-D-redundant} and completes the induction.

Equation \eqref{eq:c-D-triangular} immediately gives
\[
 \C[c_1,c_3,\ldots,c_{2p-1}]
 \subseteq\C[D_1,D_3,\ldots,D_{2p-1}].
\]
For the reverse inclusion, solve \eqref{eq:c-D-triangular} for the
newest odd Capelli element:
\[
 D_{2p-1}=\frac{1}{2p-1}
 \left(c_{2p-1}-P_p(D_1,D_3,\ldots,D_{2p-3})\right).
\]
Induction on \(p\), beginning with \(D_1=c_1\), expresses each lower
odd \(D_j\) in terms of lower odd cyclic elements.  This proves
\eqref{eq:finite-subalgebras}.
\end{proof}

\begin{example}
The cases \(n=2,3,4,5\) of
\eqref{eq:coefficient-recursion}, before final substitution, are
\begin{align*}
 c_1&=D_1,\\
 D_1+2D_2&=c_1(D_1-1),\\
 c_3&=6D_2+3D_3-D_1D_2,\\
 D_1+14D_2+18D_3+4D_4
 &=c_1(D_1+2D_2+D_3-1)+(D_1-2)c_3.
\end{align*}
Consequently,
\begin{align*}
 D_2&=\frac12D_1^2-D_1,\\
 c_3&=3D_3+4D_1^2-6D_1-\frac12D_1^3,\\
 D_4&=D_1D_3-6D_3+6D_1-\frac{11}{2}D_1^2
      +\frac32D_1^3-\frac18D_1^4.
\end{align*}
These examples display both the triangular leading coefficient and
the naturally occurring rational coefficients.
\end{example}

\subsection{An independent proof of odd one-row generation}\label{subsec:odd-generation}

The triangularity theorem, together with Sergeev's generation theorem,
already implies that the odd-indexed one-row Capelli elements generate
the center.  We conclude this section with a direct filtered proof that
does not use the Newton identity or the Grigoryev--Nazarov product.
This independent argument explains the result entirely through the
structure of the supersymmetric-function algebra.

\begin{theorem}
\label{thm:direct-odd-generation}
The odd-indexed normalized one-row Capelli elements generate the
center:
\[
 Z(\U(\q_N))=\C[D_1,D_3,D_5,\ldots].
\]
\end{theorem}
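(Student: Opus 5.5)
We pass through the Harish--Chandra isomorphism \eqref{eq:q-HC}, so it suffices to show that \(\Gamma_N\) is generated by the polynomials \(Q^-_{(2k-1)}=\HC(D_{2k-1})\), \(k\geq1\). We filter \(\Gamma_N\) by degree in \(\lambda_1,\ldots,\lambda_N\). Since \(\HC\) is an isomorphism onto \(\Gamma_N\), and the associated graded of \(\Gamma_N\) under the degree filtration is \(\Gamma_N\) itself (a graded algebra), it is enough to show two things: the top homogeneous components of the \(Q^-_{(2k-1)}\) generate \(\Gamma_N\) as a graded algebra, and each \(Q^-_{(2k-1)}\) lies in \(\Gamma_N\). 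The second holds by construction. For the first, \cite[Proposition~2.11]{Ivanov2005} says the top component of \(Q^-_{(r)}\) is the ordinary one-row Schur \(Q\)-function \(q_r=Q_{(r)}\), specialized to \(N\) variables.

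The classical input is that the ordinary supersymmetric algebra \(\Gamma_N\) (symmetric polynomials with the cancellation property) is generated by the odd power sums \(p_1,p_3,p_5,\ldots\); see \cite[Chapter~III, Section~8]{Macdonald1995} for the stable version, and the finite-variable version follows by specialization, since \(\rho_N\) maps \(\Gamma\) onto \(\Gamma_N\). We relate the \(q_r\) to the odd power sums through the generating function
\[
\sum_{r\ge0}q_r t^r=\prod_i\frac{1+x_it}{1-x_it}=\exp\Bigl(2\sum_{k\ \mathrm{odd}}\frac{p_k t^k}{k}\Bigr).
\]
Comparing coefficients of \(t^{2k-1}\) gives
\[
q_{2k-1}=\frac{2}{2k-1}\,p_{2k-1}+(\text{polynomial in }p_1,\ldots,p_{2k-3}).
\]
By induction on \(k\), we get \(\C[q_1,q_3,\ldots,q_{2k-1}]=\C[p_1,p_3,\ldots,p_{2k-1}]\). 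The same generating function shows that each even \(q_{2k}\) is a polynomial in the odd \(p\)'s, and so also in the odd \(q\)'s. Hence the odd \(q_{2k-1}\) generate \(\Gamma_N\) as a graded algebra.

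The standard filtered lifting argument then finishes the proof. Let \(A=\C[Q^-_{(1)},Q^-_{(3)},\ldots]\subseteq\Gamma_N\), and show by induction on \(d\) that every \(f\in\Gamma_N\) of degree at most \(d\) lies in \(A\). Take the top component \(f_d\) of such an \(f\); it lies in \(\Gamma_N\), because the cancellation condition and symmetry are both preserved under taking homogeneous components. Write \(f_d=F(q_1,q_3,\ldots)\) with \(F\) weighted homogeneous of degree \(d\). Then \(f-F(Q^-_{(1)},Q^-_{(3)},\ldots)\) is in \(\Gamma_N\) and has degree less than \(d\), and the induction applies. Pulling back by \(\HC\) gives \(Z(\U(\q_N))=\C[D_1,D_3,\ldots]\).

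The main obstacle I expect is citing the generation of \(\Gamma_N\) by odd power sums precisely in finite rank. The cleanest route is the surjectivity of \(\Gamma\to\Gamma_N\). This follows from the description of \(\Gamma_N\) as an inverse limit together with the fact, used above, that the \(Q^-_\lambda\) with \(\ell(\lambda)\le N\) form a basis of \(\Gamma_N\): the stable \(Q^-_\lambda\) restrict to them, so restriction is onto. The top components \(Q_\lambda\), \(\ell(\lambda)\le N\), then span the graded pieces of \(\Gamma_N\), and these \(Q_\lambda\) lie in the image of \(\C[p_1,p_3,\ldots]=\Gamma\). This gives the graded generation statement directly.
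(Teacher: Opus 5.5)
Your proposal is correct and is essentially the paper's proof. The shared steps are: odd power sums generate $\Gamma$; the exponential form of $\sum_r q_r t^r$ gives the triangular relation $q_{2m-1}=\tfrac{2}{2m-1}p_{2m-1}+\cdots$; surjectivity of specialization $\Gamma\to\Gamma_N$ carries generation down to rank $N$; and a degree-filtration lifting, using Ivanov's top-component result and then the Harish--Chandra isomorphism, finishes. The only difference is cosmetic: you get surjectivity of $\Gamma\to\Gamma_N$ from the factorial basis $Q^-_\lambda$ with $\ell(\lambda)\le N$, while the paper uses the basis of ordinary $Q_\lambda$. Either works, since specialization preserves degree.
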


\begin{proof}
We first work in the stable algebra \(\Gamma\) of supersymmetric
functions.  Write \(p_k=\sum_i x_i^k\) and
\(q_r=Q_{(r)}\), with \(q_0=1\).  The standard one-row generating
function is
\begin{equation}
 \sum_{r\geq0}q_rt^r
 =\prod_i\frac{1+x_it}{1-x_it}
 =\exp\left(2\sum_{\substack{k\geq1\\k\text{ odd}}}
            \frac{p_k}{k}t^k\right).
 \label{eq:ordinary-Q-generating}
\end{equation}
The first equality is classical; the second follows by taking formal
logarithms and using
\[
 \log\frac{1+z}{1-z}
 =2\sum_{j\geq0}\frac{z^{2j+1}}{2j+1}.
\]
Taking the coefficient of \(t^{2m-1}\) in
\eqref{eq:ordinary-Q-generating} gives
\begin{equation}
 q_{2m-1}=\frac{2}{2m-1}p_{2m-1}
 +R_m(p_1,p_3,\ldots,p_{2m-3})
 \label{eq:q-p-triangular}
\end{equation}
for a polynomial \(R_m\) with rational coefficients.  Indeed, the
factor \(p_{2m-1}t^{2m-1}\) can occur only once and only from the
linear term of the exponential; every other contribution uses odd
indices strictly smaller than \(2m-1\).

It is standard that
\begin{equation}
 \Gamma=\C[p_1,p_3,p_5,\ldots];
 \label{eq:Gamma-power-sums}
\end{equation}
see \cite[Chapter~III, Section~8]{Macdonald1995}.  Equation
\eqref{eq:q-p-triangular} and induction on \(m\) therefore imply
\begin{equation}
 \Gamma=\C[q_1,q_3,q_5,\ldots].
 \label{eq:Gamma-odd-q}
\end{equation}

Specialization to the first \(N\) variables maps \(\Gamma\) onto
\(\Gamma_N\).  One way to see surjectivity is that the specializations
of the \(Q_\lambda\) with \(\ell(\lambda)\leq N\) form a basis of
\(\Gamma_N\).  Thus the specialized odd \(q_r\)'s generate
\(\Gamma_N\).

We now pass from ordinary to factorial one-row functions.  Filter
\(\Gamma_N\) by total polynomial degree and put
\[
 A_N=\C[Q^-_{(1)},Q^-_{(3)},Q^-_{(5)},\ldots]\subseteq\Gamma_N.
\]
The highest homogeneous component of \(Q^-_{(r)}\) is \(q_r\).
We prove \(A_N=\Gamma_N\) by induction on degree.  Let
\(f\in\Gamma_N\) have degree at most \(d\), and let \(f_d\) be its
degree-\(d\) component.  The cancellation condition is homogeneous,
so \(f_d\in\Gamma_N\).  By the preceding paragraph, there is a
weighted-homogeneous polynomial \(F\) such that
\[
 f_d=F(q_1,q_3,q_5,\ldots),
\]
where \(q_r\) has weight \(r\).  Hence
\(F(Q^-_{(1)},Q^-_{(3)},\ldots)\) has highest homogeneous component
\(f_d\).  The difference
\[
 f-F(Q^-_{(1)},Q^-_{(3)},\ldots)
\]
has degree at most \(d-1\), and belongs to \(A_N\) by the induction
hypothesis.  The second term belongs to \(A_N\) by definition, so
\(f\in A_N\).  The degree-zero case is immediate.  This proves
\(A_N=\Gamma_N\).

Finally, \eqref{eq:D-HC} gives
\[
 \HC\bigl(\C[D_1,D_3,D_5,\ldots]\bigr)=\Gamma_N.
\]
Surjectivity and injectivity of the Harish-Chandra isomorphism
\eqref{eq:q-HC} prove the asserted equality. 
\end{proof}

\begin{remark}
Theorem~\ref{thm:direct-odd-generation} also follows immediately from
Proposition~\ref{prop:cyclic-center} and
\eqref{eq:finite-subalgebras}.  The proof above is independent of the
Newton identity and of the Perelomov-Popov formula.
\end{remark}

\section{Finite-rank identities and generic reconstruction}
\label{sec:hankel-reconstruction}

This section develops the fixed-rank consequences of the two rational
Harish-Chandra products.  We prove central Hankel identities, a
difference determinant adapted to the Newton basis, an explicit
resultant formula for the basic Hankel divisor, and finite generation
after localization.  The general rational-sequence results underlying
these arguments are cited at the points where they are used.

Throughout the section, we put
\[
 Z_N=Z\bigl(\U(\q_N)\bigr),
 \qquad
 b_r=c_{2r-1}\quad(r\geq1),
 \qquad
 \pi_r=\HC(b_r),
\]
and let \(K=\C(\lambda_1,\ldots,\lambda_N)\).  By
Proposition~\ref{prop:cyclic-center} (or, equivalently, by
Theorems~\ref{thm:direct-odd-generation} and~\ref{thm:triangularity}),
\begin{equation}
 Z_N=\C[b_1,b_2,b_3,\ldots].
 \label{eq:center-generated-by-b}
\end{equation}
Since \(\HC(Z_N)=\Gamma_N\subseteq
\C[\lambda_1,\ldots,\lambda_N]\), the algebra \(Z_N\) is an integral
domain, and the injective homomorphism \(\HC\) extends uniquely to an
injection
\[
 \HC:\operatorname{Frac}(Z_N)\hookrightarrow K.
\]

\subsection{Rational series and finite-rank identities}

We begin with the elementary rational-series lemma underlying all the
Hankel identities below.  In the supermatrix setting, the same
recurrence and determinant argument appears in
\cite[Eq.~(3.1) and Corollary~(3.8)]{KV2005}; the general formulation in
terms of recurrent sequences and Hankel matrices is recorded in
\cite[Appendix~A]{KV2005}.

\begin{lemma}
\label{lem:rational-hankel}
Let \(A\) be a commutative ring, and suppose that
\[
 H(z)=\sum_{m\geq0}h_mz^m\in A[[z]]
\]
satisfies \(Q(z)H(z)=P(z)\), where
\[
 \deg P\leq N,
 \qquad
 Q(z)=1+q_1z+\cdots+q_Nz^N,
 \qquad
 \deg Q\leq N.
\]
Then
\begin{equation}
 h_m+q_1h_{m-1}+\cdots+q_Nh_{m-N}=0
 \qquad(m>N).
 \label{eq:rational-linear-recurrence}
\end{equation}
Consequently,
\begin{equation}
 \det\bigl(h_{r+i+j}\bigr)_{0\leq i,j\leq N}=0
 \qquad(r\geq1).
 \label{eq:abstract-hankel-vanishing}
\end{equation}
\end{lemma}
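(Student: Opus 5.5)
The plan has two steps: first read off the recurrence from the coefficients of $QH=P$, then use it to exhibit a column dependence in the Hankel matrix.

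\emph{Recurrence.} Set $q_0=1$ and $h_k=0$ for $k<0$. For every $m\geq0$, the coefficient of $z^m$ in $Q(z)H(z)$ is $\sum_{k=0}^{N}q_kh_{m-k}$. Since $\deg P\leq N$, this coefficient vanishes whenever $m>N$. That vanishing is exactly \eqref{eq:rational-linear-recurrence}. For $m>N$ every index $m-k$ with $0\leq k\leq N$ is nonnegative, so the convention $h_k=0$ for $k<0$ is never actually used.

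\emph{Hankel vanishing.} Fix $r\geq1$ and let $M=(h_{r+i+j})_{0\leq i,j\leq N}$. Let $v_j$ denote the $j$-th column of $M$, with entries $h_{r+i+j}$ for $i=0,\ldots,N$. Consider the combination
\[
 v_N+q_1v_{N-1}+\cdots+q_Nv_0 .
\]
Its $i$-th entry is $\sum_{k=0}^{N}q_kh_{r+i+N-k}$, which is the recurrence evaluated at $m=r+i+N$. Since $r\geq1$, we have $m\geq N+1>N$, so \eqref{eq:rational-linear-recurrence} shows this entry is zero. Hence the column vector $(q_N,\ldots,q_1,1)^{T}$ lies in the kernel of $M$.

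It remains to pass from a kernel vector to $\det M=0$, with some care because $A$ is an arbitrary commutative ring. Multiply $M$ by the adjugate, $\operatorname{adj}(M)M=\det(M)\,I$, and apply both sides to the kernel vector. The left side gives $0$, and the right side gives $\det(M)\,(q_N,\ldots,q_1,1)^{T}$. Reading off the last coordinate, which is $1$, gives $\det M=0$.

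This last step is the only potential subtlety, since over a general ring a nonzero kernel vector need not force the determinant to vanish. The monic normalization $q_0=1$ handles it. Alternatively, one can add $q_k$ times column $N-k$ to column $N$ for each $k\geq1$; this leaves the determinant unchanged and turns the last column into the zero column.
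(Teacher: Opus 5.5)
Your proof is correct and follows the paper's argument. Both extract the recurrence from the vanishing coefficient of $z^m$ in $QH=P$ for $m>N$, then apply it at $m=r+i+N$ to show that the last column of the Hankel matrix is an $A$-linear combination of the first $N$ columns. Your adjugate (or column-operation) step makes explicit the deduction $\det M=0$ over a general commutative ring, which the paper leaves implicit.
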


\begin{proof}
For \(m>N\), the coefficient of \(z^m\) in \(P(z)\) is zero.
Equating the coefficient of \(z^m\) in \(Q(z)H(z)=P(z)\) therefore
gives \eqref{eq:rational-linear-recurrence}.

Fix \(r\geq1\).  For each \(0\leq i\leq N\), apply
\eqref{eq:rational-linear-recurrence} with \(m=r+i+N>N\).  We obtain
\[
 h_{r+i+N}
 =-q_1h_{r+i+N-1}-\cdots-q_Nh_{r+i}.
\]
Thus the last column of the matrix
\(\bigl(h_{r+i+j}\bigr)_{0\leq i,j\leq N}\) is an
\(A\)-linear combination of its first \(N\) columns, with coefficients
independent of the row.  Its determinant is therefore zero.
\end{proof}

The odd cyclic series is rational by the queer Perelomov-Popov formula
\cite[Sec.~2.2]{GN2019}, while the Laurent expansion of the one-row
Capelli series is rational by Ivanov's generating identity
\cite[Cor.~8.3]{Ivanov2005} and the Harish--Chandra calculation recalled
in \eqref{eq:capelli-series-HC}.  Applying
Lemma~\ref{lem:rational-hankel} to these two series gives the following
central identities.

\begin{proposition}
\label{prop:finite-rank-hankel}
Write
\[
 \calD_N(u)=1+\sum_{n\geq1}\alpha_nu^{-n}.
\]
Equivalently, by \eqref{eq:alpha-beta},
\[
 \alpha_n=\sum_{j=1}^n S(n-1,j-1)D_j
 \qquad(n\geq1).
\]
Then, for every \(r\geq1\), the following identities hold in \(Z_N\):
\begin{align}
 \det\bigl(c_{2(r+i+j)-1}\bigr)_{0\leq i,j\leq N}&=0,
 \label{eq:cyclic-hankel}\\
 \det\bigl(\alpha_{r+i+j}\bigr)_{0\leq i,j\leq N}&=0.
 \label{eq:Capelli-hankel}
\end{align}
\end{proposition}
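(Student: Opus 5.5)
The plan is to apply Lemma~\ref{lem:rational-hankel} with \(A=\Gamma_N\) (or \(K\)) to the Harish--Chandra images of the two series. Then transport the vanishing determinants back to \(Z_N\) by injectivity of \(\HC\). Since \(\HC\) is an algebra homomorphism, \(\HC\) of a determinant with central entries is the determinant of the images. It therefore suffices to show that the \(\HC\)-images of both determinants vanish.

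For \eqref{eq:cyclic-hankel}, use \eqref{eq:PP-z}. Put
\[
 Q(z)=\prod_{i=1}^N\bigl(1-z\lambda_i(\lambda_i-1)\bigr),
 \qquad
 P(z)=\prod_{i=1}^N\bigl(1-z\lambda_i(\lambda_i+1)\bigr).
\]
Both have constant term \(1\) and degree at most \(N\). Then \(Q(z)\bigl(1-\sum_{r\geq1}\pi_rz^r\bigr)=P(z)\). Write \(h_0=1\) and \(h_m=-\pi_m\) for \(m\geq1\). The lemma gives \(\det(h_{r+i+j})_{0\leq i,j\leq N}=0\) for \(r\geq1\). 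Every index \(r+i+j\) is at least \(1\), so the entries are \(-\pi_{r+i+j}\), and the determinant differs from \(\det(\pi_{r+i+j})\) by the sign \((-1)^{N+1}\). Since \(\pi_{r+i+j}=\HC(c_{2(r+i+j)-1})\), \eqref{eq:cyclic-hankel} follows after applying injectivity.

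For \eqref{eq:Capelli-hankel}, use \eqref{eq:capelli-series-HC} with \(z=u^{-1}\). Then
\[
 \HC(\calD_N(u))=\prod_{i=1}^N\frac{1+z(1+\lambda_i)}{1+z(1-\lambda_i)}.
\]
Take \(Q(z)=\prod_i(1+z(1-\lambda_i))\), which has constant term \(1\) and degree \(N\), and \(P(z)=\prod_i(1+z(1+\lambda_i))\), which has degree \(N\). With \(h_n=\HC(\alpha_n)\), the identity \(Q(z)H(z)=P(z)\) holds in \(\Gamma_N[[z]]\), once one checks the polynomial identity in \(\C[\lambda][[z]]\) and notes that the coefficients \(h_n\) lie in \(\Gamma_N\). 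The lemma then gives \(\det(\HC(\alpha_{r+i+j}))=0\), and injectivity yields \eqref{eq:Capelli-hankel}.

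I do not expect a real obstacle. The one point needing care is that \(Q\) has coefficients in \(\C[\lambda]\) rather than in \(\Gamma_N\). To handle this, apply the lemma with \(A=\C[\lambda_1,\ldots,\lambda_N]\), which contains \(\Gamma_N\). The resulting determinant is an element of \(\Gamma_N\) that vanishes in \(A\), so it is zero. Alternatively, note that the coefficients of \(Q\) are symmetric, but this is not needed.
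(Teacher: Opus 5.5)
Your proposal is correct and follows the paper's proof essentially verbatim. You apply Lemma~\ref{lem:rational-hankel} to the Harish--Chandra image of the Perelomov--Popov series, with \(h_0=1\), \(h_m=-\pi_m\) and the sign \((-1)^{N+1}\), and to the Capelli series at \(z=u^{-1}\), then pull both determinants back to \(Z_N\) by injectivity of \(\HC\).

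The only wrinkle is your phrase that \(Q(z)H(z)=P(z)\) holds in \(\Gamma_N[[z]]\). Since \(P\) and \(Q\) have coefficients outside \(\Gamma_N\), the identity holds in \(\C[\lambda_1,\ldots,\lambda_N][[z]]\). That is exactly the ring \(A\) you correctly settle on at the end, a point the paper leaves implicit.
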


\begin{proof}
For the cyclic elements, substitute \(z=s^{-2}\) in the
Perelomov--Popov identity \eqref{eq:PP-product}.  This gives
\begin{equation}
 1-\sum_{r\geq1}\pi_rz^r
 =\prod_{i=1}^{N}
 \frac{1-\lambda_i(\lambda_i+1)z}
      {1-\lambda_i(\lambda_i-1)z}.
 \label{eq:hankel-PP-product}
\end{equation}
The numerator and denominator have degree at most \(N\), and the
denominator has constant term one.  Lemma~\ref{lem:rational-hankel}
therefore applies with \(h_0=1\) and \(h_r=-\pi_r\) for \(r\geq1\).
Since every index \(r+i+j\) in the relevant matrix is positive,
\[
 \det(h_{r+i+j})_{0\leq i,j\leq N}
 =(-1)^{N+1}
  \det(\pi_{r+i+j})_{0\leq i,j\leq N}=0.
\]
But
\[
 \HC\!\left(
  \det(b_{r+i+j})_{0\leq i,j\leq N}
 \right)
 =\det(\pi_{r+i+j})_{0\leq i,j\leq N}.
\]
The injectivity of \(\HC\) proves \eqref{eq:cyclic-hankel}.

For the Capelli coefficients, put \(z=u^{-1}\) in
\eqref{eq:capelli-series-HC}.  After multiplying the numerator and
denominator of each factor by \(z\), we obtain
\begin{equation}
 \HC\bigl(\calD_N(z^{-1})\bigr)
 =\prod_{i=1}^{N}
   \frac{1+(1+\lambda_i)z}{1+(1-\lambda_i)z}
 =1+\sum_{n\geq1}\HC(\alpha_n)z^n.
 \label{eq:Capelli-rational-z}
\end{equation}
This is again a quotient of two polynomials of degree at most \(N\),
with denominator having constant term one.  Hence
Lemma~\ref{lem:rational-hankel} gives
\[
 \det\bigl(\HC(\alpha_{r+i+j})\bigr)_{0\leq i,j\leq N}=0.
\]
Since \(\HC\) is an algebra homomorphism and is injective, this is
equivalent to \eqref{eq:Capelli-hankel}.
\end{proof}

\subsection{A difference-determinant identity in the Newton basis}

The Capelli series is naturally expanded in the reciprocal
falling-factorial, or Newton, basis used in Ivanov's one-row identity
\cite[Sec.~8]{Ivanov2005}.  The ordinary Laurent coefficients
\(\alpha_n\) satisfy the constant-coefficient recurrence of the previous
subsection.  The original one-row coefficients \(D_j\), however, are
governed by a recurrence involving the shift operator and polynomial
coefficients in \(j\).  We now formulate this recurrence carefully and
eliminate its noncentral coefficients by a determinant.

For \(j\geq0\), set
\[
 \varepsilon_j(u)=\frac{1}{\fall{u}{j}}.
\]
Because
\(\varepsilon_j(u)=u^{-j}+O(u^{-j-1})\), the family
\((\varepsilon_j)_{j\geq1}\) is a topological basis of
\(u^{-1}\C[[u^{-1}]]\).  More precisely, every
\(F(u)\in u^{-1}\C[[u^{-1}]]\) has a unique expansion
\[
 F(u)=\sum_{j\geq1}f_j\varepsilon_j(u),
\]
where the sum is understood in the \(u^{-1}\)-adic topology.  We use
the decomposition
\[
 \C((u^{-1}))=\C[u]\oplus u^{-1}\C[[u^{-1}]].
\]
Since multiplication by a polynomial preserves \(\C[u]\), it induces
an operator on the quotient
\(\C((u^{-1}))/\C[u]\), which we identify with
\(u^{-1}\C[[u^{-1}]]\).  Thus, throughout this subsection, polynomial
parts are discarded when a polynomial acts on a reciprocal Newton
series.

To translate the denominator-clearing identity into a recurrence for
the Newton coefficients \(D_j\), we first describe the action of
multiplication by \(u+1\) on the reciprocal falling-factorial basis.

\begin{lemma}
\label{lem:T-operator}
For every \(j\geq1\),
\begin{equation}
 (u+1)\varepsilon_j(u)
 =\varepsilon_{j-1}(u)+j\varepsilon_j(u).
 \label{eq:epsilon-shift}
\end{equation}
Consequently, multiplication by \(u+1\), modulo \(\C[u]\), acts on a
coefficient sequence \(D=(D_j)_{j\geq1}\) by
\begin{equation}
 (\mathcal TD)_j=D_{j+1}+jD_j
 \qquad(j\geq1).
 \label{eq:T-operator}
\end{equation}
For every polynomial \(g\in K[x]\), after extending scalars to \(K\),
\begin{equation}
 \bigl(g(\mathcal T)D\bigr)_j
 =\sum_{l=0}^{\deg g}
   \frac{(\Delta^lg)(j)}{l!}\,D_{j+l},
 \label{eq:T-closed-form}
\end{equation}
where \((\Delta g)(j)=g(j+1)-g(j)\).
\end{lemma}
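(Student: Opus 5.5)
The plan is to verify \eqref{eq:epsilon-shift} directly and then derive the coefficient action and its closed form from it. For \(j\geq1\) we have \(\fall{u}{j}=\fall{u}{j-1}\,(u-j+1)\). Writing \(u+1=(u-j+1)+j\) gives
\[
 (u+1)\varepsilon_j(u)=\frac{u-j+1}{\fall{u}{j}}+\frac{j}{\fall{u}{j}}
 =\varepsilon_{j-1}(u)+j\varepsilon_j(u).
\]
When \(j=1\), the term \(\varepsilon_0=1\) lies in \(\C[u]\) and is discarded in the quotient \(\C((u^{-1}))/\C[u]\). Next, take \(F=\sum_{j\geq1}D_j\varepsilon_j\). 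Because multiplication by \(u+1\) is \(u^{-1}\)-adically continuous (it raises degree by at most one), it can be applied termwise. Collecting the coefficient of \(\varepsilon_j\), the contributions are \(D_{j+1}\) from the \(\varepsilon_{(j+1)-1}\) term and \(jD_j\) from the diagonal term. This gives \((\mathcal TD)_j=D_{j+1}+jD_j\). Uniqueness of the expansion in the topological basis \((\varepsilon_j)_{j\geq1}\) makes this identification legitimate.

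For \eqref{eq:T-closed-form}, I would write \(\mathcal T=S+M\), where \((SD)_j=D_{j+1}\) is the shift and \((MD)_j=jD_j\) is multiplication by the index. These satisfy \(SM=(M+1)S\), so \(\mathcal T=S+M\) acts like the operator \(E+x\) in a Weyl-type algebra. The cleanest route is induction on \(\deg g\), checking that the right side of \eqref{eq:T-closed-form} behaves correctly under \(g\mapsto xg\). Since both sides are \(K\)-linear in \(g\), it suffices to verify that if \(R_g\) denotes the right-hand operator, then \(\mathcal T R_g=R_{xg}\); the case \(g=1\) is trivial.

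To carry out the induction step, compute
\[
 (\mathcal T R_gD)_j=(R_gD)_{j+1}+j(R_gD)_j
 =\sum_l\frac{(\Delta^lg)(j+1)}{l!}D_{j+1+l}+\sum_l\frac{j(\Delta^lg)(j)}{l!}D_{j+l}.
\]
The target is \(\sum_l\frac{(\Delta^l(xg))(j)}{l!}D_{j+l}\). By the discrete Leibniz rule,
\[
 \Delta^l(xg)(j)=j(\Delta^lg)(j)+l(\Delta^{l-1}g)(j+1),
\]
which follows by induction on \(l\) from \(\Delta(xg)(j)=j\Delta g(j)+g(j+1)\). Substituting, the \(l\)-th target coefficient equals \(\frac{j(\Delta^lg)(j)}{l!}+\frac{(\Delta^{l-1}g)(j+1)}{(l-1)!}\). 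This matches the computed expression after the index shift \(l\to l+1\) in the first sum, and the range \(l\leq\deg g+1=\deg(xg)\) is consistent. I expect the only real obstacle to be getting this finite-difference Leibniz identity with the correct shift, namely \(j+1\) rather than \(j\); I would check it first on \(g=x\).
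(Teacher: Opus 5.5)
Your proposal is correct and follows the paper's proof essentially step for step: the same splitting \(u+1=(u-j+1)+j\), the same extraction of the \(\varepsilon_j\)-coefficient, and the same induction via \(\mathcal T R_g=R_{xg}\) using the finite-difference Leibniz rule \(\Delta^l(xg)(j)=j\,\Delta^lg(j)+l\,\Delta^{l-1}g(j+1)\). Your remarks on \(u^{-1}\)-adic continuity of multiplication by \(u+1\), and on discarding \(\varepsilon_0=1\) when \(j=1\), make explicit two details that the paper leaves implicit.
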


\begin{proof}
Since \(u+1=(u-j+1)+j\), we have
\[
 \frac{u+1}{\fall{u}{j}}
 =\frac{u-j+1}{u(u-1)\cdots(u-j+1)}
  +\frac{j}{\fall{u}{j}}
 =\varepsilon_{j-1}(u)+j\varepsilon_j(u),
\]
which proves \eqref{eq:epsilon-shift}.  If
\(F(u)=\sum_{j\geq1}D_j\varepsilon_j(u)\), the coefficient of
\(\varepsilon_j\) in \((u+1)F(u)\), modulo polynomials, is therefore
\(D_{j+1}+jD_j\).  This proves \eqref{eq:T-operator}.

For \eqref{eq:T-closed-form}, let \(L_g\) denote the operator on the
right-hand side.  We have \(L_1=\mathrm{id}\) and
\(L_x=\mathcal T\).  It is therefore enough to prove
\(\mathcal T L_g=L_{xg}\).  For \(m\geq0\), the coefficient of
\(D_{j+m}\) in \((\mathcal TL_gD)_j\) is
\[
 \frac{j(\Delta^mg)(j)}{m!}
 +\begin{cases}
 \displaystyle
 \frac{(\Delta^{m-1}g)(j+1)}{(m-1)!},&m\geq1,\\[6pt]
 0,&m=0.
 \end{cases}
\]
The finite-difference Leibniz rule gives
\[
 \Delta^m(xg)(j)
 =j\Delta^mg(j)+m\Delta^{m-1}g(j+1),
\]
where the second term is absent when \(m=0\).  Dividing by \(m!\)
shows that the preceding coefficient is exactly
\((\Delta^m(xg))(j)/m!\).  Hence
\(\mathcal TL_g=L_{xg}\), and induction on \(\deg g\) proves
\eqref{eq:T-closed-form}.
\end{proof}

Clearing the denominator of the Harish-Chandra product now produces
a polynomial annihilator of degree \(N\) for the Newton coefficient
sequence.  Eliminating its coefficients at \(N+1\) consecutive indices
gives the following central determinant identity.

\begin{theorem}
\label{thm:D-form}
For every \(r\geq1\), the following identity holds in \(Z_N\):
\begin{equation}
 \boxed{
 \det\Bigl(\bigl(\mathcal T^kD\bigr)_{r+i}
      \Bigr)_{0\leq i,k\leq N}=0.}
 \label{eq:D-form}
\end{equation}
Explicitly,
\begin{equation}
 \bigl(\mathcal T^kD\bigr)_m
 =\sum_{l=0}^{k}\frac{1}{l!}
  \left(
   \sum_{t=0}^{l}(-1)^{l-t}\binom{l}{t}(m+t)^k
  \right)D_{m+l}.
 \label{eq:T-power-explicit}
\end{equation}
Thus every entry in \eqref{eq:D-form} is an explicit scalar linear
combination of \(D_m,\ldots,D_{m+k}\).
\end{theorem}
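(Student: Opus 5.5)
Work on the Harish--Chandra side over \(K\) and use injectivity of \(\HC\), now applied to a determinant of central elements. The idea is to clear the denominator of the Harish--Chandra product.

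Put \(Q(u)=\prod_{i=1}^N(u+1-\lambda_i)\) and \(P(u)=\prod_{i=1}^N(u+1+\lambda_i)\). By \eqref{eq:capelli-series-HC}, \(Q(u)\,\HC(\calD_N(u))=P(u)\). Write \(F(u)=\HC(\calD_N(u))-1=\sum_{j\geq1}\HC(D_j)\varepsilon_j(u)\). Then
\[
Q(u)F(u)=P(u)-Q(u)\in K[u],
\]
so the image of \(Q(u)F(u)\) in \(K((u^{-1}))/K[u]\) vanishes.

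Expand \(Q(u)=g(u+1)\) with \(g(x)=\prod_i(x-\lambda_i)=\sum_{k=0}^N g_kx^k\in K[x]\), which is monic of degree \(N\). Lemma~\ref{lem:T-operator} says that multiplication by \(u+1\) modulo polynomials acts on coefficient sequences as \(\mathcal T\). The same then holds for multiplication by \(g(u+1)\), acting as \(g(\mathcal T)\); for this, check that reduction modulo \(K[u]\) is compatible with iterated multiplication, since polynomial parts stay polynomial. Uniqueness of the expansion in the topological basis \((\varepsilon_j)_{j\geq1}\) then gives
\[
\sum_{k=0}^N g_k\,\bigl(\mathcal T^k\HC(D)\bigr)_m=0\qquad\text{for all }m\geq1.
\]
Here \(\mathcal T\) commutes with \(\HC\) applied entrywise, because its coefficients are integers.

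Now fix \(r\geq1\) and take \(m=r+i\) for \(0\leq i\leq N\). The vector \((g_0,\ldots,g_N)\) is nonzero, since \(g_N=1\), and lies in the kernel of the matrix \(\bigl((\mathcal T^k\HC(D))_{r+i}\bigr)_{i,k}\) over the field \(K\). Hence its determinant vanishes. This determinant equals \(\HC\) of the determinant in \eqref{eq:D-form}, and \(\HC\) is an injective algebra homomorphism, so \eqref{eq:D-form} holds in \(Z_N\).

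The explicit formula \eqref{eq:T-power-explicit} is \eqref{eq:T-closed-form} with \(g(x)=x^k\), using the standard expansion \((\Delta^l x^k)(m)=\sum_{t}(-1)^{l-t}\binom{l}{t}(m+t)^k\). The sum stops at \(l=k\) because \(\Delta^l x^k=0\) for \(l>k\).

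The only delicate point is the passage to the quotient modulo polynomials. I must make sure that \(\varepsilon_0=1\) is correctly discarded in \eqref{eq:epsilon-shift}, and that iterating the operator on the quotient really computes \(g(u+1)F\) modulo \(K[u]\). This holds because \(K[u]\) is stable under multiplication by \(u+1\).
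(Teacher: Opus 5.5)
Your proposal is correct and follows essentially the same route as the paper: clear the denominator \(Q(u)=g(u+1)\) of the Harish--Chandra product, use Lemma~\ref{lem:T-operator} to turn the vanishing modulo \(K[u]\) into \(g(\mathcal T)\HC(D)=0\), use the coefficient vector of the monic \(g\) as a nonzero kernel vector at the indices \(r,\ldots,r+N\), and lift to \(Z_N\) by injectivity of \(\HC\). Subtracting the constant term (working with \(P-Q\) rather than \(P\)) and checking that reduction modulo \(K[u]\) is compatible with iterated multiplication are slightly more explicit versions of steps the paper leaves implicit.
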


\begin{proof}
Set
\[
 Q(u)=\prod_{i=1}^N(u+1-\lambda_i),
 \qquad
 g(x)=\prod_{i=1}^N(x-\lambda_i),
\]
so that \(Q(u)=g(u+1)\).  Equation
\eqref{eq:capelli-series-HC} gives
\begin{equation}
 Q(u)\HC\bigl(\calD_N(u)\bigr)
 =\prod_{i=1}^N(u+1+\lambda_i)\in\C[\lambda_1,\ldots,
 \lambda_N][u].
 \label{eq:polynomial-after-clearing}
\end{equation}
The image of the right-hand side in
\(K((u^{-1}))/K[u]\) is zero.  By
Lemma~\ref{lem:T-operator}, the reciprocal Newton coefficients of the
left-hand side are
\(g(\mathcal T)\HC(D)\).  Hence
\begin{equation}
 \bigl(g(\mathcal T)\HC(D)\bigr)_j=0
 \qquad(j\geq1).
 \label{eq:D-recurrence}
\end{equation}

Write \(e_a(\lambda)\) for the elementary symmetric polynomial of
degree \(a\) in \(\lambda_1,\ldots,\lambda_N\).  Since
\[
 g(x)=\sum_{k=0}^N(-1)^{N-k}e_{N-k}(\lambda)x^k,
\]
equation \eqref{eq:D-recurrence}, evaluated at
\(j=r,r+1,\ldots,r+N\), says that the nonzero vector
\[
 \bigl((-1)^{N-k}e_{N-k}(\lambda)\bigr)_{k=0}^{N}
 \in K^{N+1}
\]
lies in the kernel of
\[
 \left(
  (\mathcal T^k\HC(D))_{r+i}
 \right)_{0\leq i,k\leq N}.
\]
The vector is nonzero because its entry at \(k=N\) is \(1\).
Therefore the determinant of this matrix is zero in \(K\).

The operator \(\mathcal T\) is defined using only integer scalars and
shifts, so
\[
 \HC\bigl((\mathcal T^kD)_m\bigr)
 =(\mathcal T^k\HC(D))_m.
\]
Applying \(\HC\) to the determinant in \eqref{eq:D-form} therefore
gives zero.  The injectivity of \(\HC\) proves the theorem.  Finally,
\eqref{eq:T-power-explicit} is \eqref{eq:T-closed-form} with
\(g(x)=x^k\) and
\[
 (\Delta^lx^k)(m)
 =\sum_{t=0}^{l}(-1)^{l-t}\binom{l}{t}(m+t)^k.
\]
\end{proof}

\begin{example}
\label{ex:D-form-N1}
For \(N=1\), equation \eqref{eq:D-form} becomes
\begin{equation}
 D_rD_{r+2}-D_{r+1}^2+D_rD_{r+1}=0.
 \label{eq:D-form-N1}
\end{equation}
Indeed, \(\HC(D_j)=2\fall{\lambda_1}{j}\) for \(j\geq1\).  Applying
\(\HC\) to the left-hand side of \eqref{eq:D-form-N1} gives
\[
 4\fall{\lambda_1}{r}^{2}(\lambda_1-r)
 \bigl[(\lambda_1-r-1)-(\lambda_1-r)+1\bigr]=0.
\]
Injectivity of \(\HC\) proves the identity in \(Z_1\).
\end{example}

\begin{remark}
Equation \eqref{eq:D-recurrence} is a recurrence of order \(N\) for
\((D_j)_{j\geq1}\), whose coefficients, after expanding the powers of
\(\mathcal T\), are polynomials in \(j\) of degree at most \(N\).  For
\(N=1\), it reduces, after applying \(\HC\), to
\[
 \HC(D_{j+1})=(\lambda_1-j)\HC(D_j).
\]
The coefficients
\((-1)^ae_a(\lambda)\) are symmetric but need not belong to
\(\Gamma_N\); for this reason, \eqref{eq:D-form} eliminates them and
states the result entirely in terms of central elements.  Notice also
that the ordinary rational-series argument cannot be applied directly
to \((D_j)\).  Already for \(N=1\), the ordinary generating series
\[
 1+2\sum_{j\geq1}\fall{\lambda_1}{j}z^j
\]
is not rational over \(\C(\lambda_1)(z)\).  Indeed, rationality would
give a nontrivial constant-coefficient recurrence
\[
 \sum_{k=0}^dq_k\fall{\lambda_1}{n+k}=0
 \qquad(n\gg0),
 \qquad q_k\in\C(\lambda_1).
\]
After division by \(\fall{\lambda_1}{n}\), this would become
\[
 \sum_{k=0}^dq_k
 \prod_{t=0}^{k-1}(\lambda_1-n-t)=0
 \qquad(n\gg0).
\]
The left-hand side is a polynomial in \(n\) which vanishes at
infinitely many integers, hence vanishes identically.  Its summands have
distinct degrees \(0,1,\ldots,d\) in \(n\), so all \(q_k\) must be zero,
a contradiction.
\end{remark}

\subsection{The basic Hankel divisor and strong typicality}

For a rational characteristic function on a \(p|q\)-dimensional
superspace, Khudaverdian and Voronov identify the first basic
\(q\times q\) Hankel determinant with the resultant of its numerator
and denominator; see \cite[Prop.~3, Eq.~(5.6)]{KV2005}.  We now compute
the analogous determinant for the queer Perelomov-Popov series.  The
factorization also distinguishes typical from strongly typical weights:
for \(\q_N\), these notions differ precisely because strong typicality
also excludes \(\lambda_i=0\).

Recall that a weight
\(\lambda=(\lambda_1,\ldots,\lambda_N)\) is typical if
\[
 \lambda_i+\lambda_j\neq0
 \qquad(i\neq j),
\]
and strongly typical if
\[
 \lambda_i+\lambda_j\neq0
 \qquad(1\leq i,j\leq N).
\]
Thus strong typicality is equivalent to typicality together with
\(\lambda_i\neq0\) for every \(i\); see
\cite[Section~2]{FriskMazorchuk2009}.
Set
\begin{equation}
 \Delta_N
 =\det\bigl(b_{i+j-1}\bigr)_{1\leq i,j\leq N}
 =\det\bigl(c_{2(i+j-1)-1}\bigr)_{1\leq i,j\leq N}
 \in Z_N.
 \label{eq:Hankel-Delta}
\end{equation}
Thus \(\Delta_N\) is a polynomial in
\(b_1,\ldots,b_{2N-1}\).

For \(1\leq i\leq N\), put
\begin{equation}
 x_i=\lambda_i(\lambda_i-1),
 \qquad
 y_i=\lambda_i(\lambda_i+1),
 \label{eq:xy-definition}
\end{equation}
and define
\begin{equation}
 P_N(z)=\prod_{j=1}^N(1-y_jz),
 \qquad
 Q_N(z)=\prod_{i=1}^N(1-x_iz).
 \label{eq:PQ-definition}
\end{equation}
With this notation, the queer Perelomov--Popov identity
\eqref{eq:hankel-PP-product} becomes
\begin{equation}
 1-\sum_{r\geq1}\pi_rz^r
 =\frac{P_N(z)}{Q_N(z)}.
 \label{eq:PQ-cyclic-series}
\end{equation}

The partial-fraction expansion of this rational function realizes the
cyclic coefficients as moments of the \(N\) nodes
\(x_1,\ldots,x_N\).

\begin{lemma}
\label{lem:moment}
In the rational-function field \(K\), one has
\begin{equation}
 \pi_r=\sum_{i=1}^Nw_ix_i^{r-1}
 \qquad(r\geq1),
 \qquad
 w_i=
 -\frac{\prod_{j=1}^N(x_i-y_j)}
        {\prod_{j\neq i}(x_i-x_j)}.
 \label{eq:moment-partial-fraction}
\end{equation}
\end{lemma}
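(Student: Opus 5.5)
The plan is to read the identity off the partial-fraction decomposition of the rational function in \eqref{eq:PQ-cyclic-series}, working in \(K[[z]]\).

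\emph{Distinct nonzero nodes.} I first check that the nodes \(x_1,\ldots,x_N\) are pairwise distinct and nonzero in \(K\). Indeed, \(x_i=\lambda_i(\lambda_i-1)\neq0\), and
\[
 x_i-x_j=(\lambda_i-\lambda_j)(\lambda_i+\lambda_j-1)\neq0
 \qquad(i\neq j),
\]
since the \(\lambda_i\) are independent indeterminates. Hence \(Q_N(z)=\prod_i(1-x_iz)\) has \(N\) distinct simple roots \(1/x_i\) over \(K\).

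\emph{Partial fractions.} Since \(\deg P_N=\deg Q_N=N\), partial fractions give
\[
 \frac{P_N(z)}{Q_N(z)}=c+\sum_{i=1}^N\frac{A_i}{1-x_iz},
 \qquad
 A_i=\frac{P_N(1/x_i)}{\prod_{j\neq i}(1-x_j/x_i)},
\]
with \(c\in K\) a constant. The formula for \(A_i\) comes from multiplying by \(1-x_iz\) and evaluating at \(z=1/x_i\). Expanding each \((1-x_iz)^{-1}\) geometrically and comparing with \eqref{eq:PQ-cyclic-series} gives, for \(r\geq1\),
\[
 -\pi_r=\sum_{i=1}^N A_ix_i^r .
\]
The constant \(c\) only affects the \(z^0\) coefficient, so I never need to compute it.

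\emph{Simplifying the coefficients.} It remains to show \(A_ix_i=-w_i\). Clearing denominators,
\[
 P_N(1/x_i)=x_i^{-N}\prod_{j=1}^N(x_i-y_j),
 \qquad
 \prod_{j\neq i}(1-x_j/x_i)=x_i^{-(N-1)}\prod_{j\neq i}(x_i-x_j).
\]
Therefore
\[
 A_ix_i=\frac{\prod_{j=1}^N(x_i-y_j)}{\prod_{j\neq i}(x_i-x_j)}=-w_i,
\]
which yields \(\pi_r=\sum_i w_ix_i^{r-1}\), as claimed in \eqref{eq:moment-partial-fraction}.

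I expect no real obstacle here. The only points needing care are the nonvanishing of \(x_i\) and of \(x_i-x_j\) in \(K\), which justify both the partial-fraction decomposition and the divisions, and the bookkeeping of powers of \(x_i\). If one prefers to avoid dividing by \(x_i\), the same result follows by comparing \((Q_N-P_N)(z)/Q_N(z)\) with \(\sum_i w_iz/(1-x_iz)\), checking equality of the degree-\(\leq N\) numerators at the \(N\) points \(z=1/x_i\) together with at \(z=0\).
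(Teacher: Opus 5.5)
Your proposal is correct and follows the paper's proof step for step: partial fractions of $P_N/Q_N$ over $K$, computing $A_i$ by multiplying by $1-x_iz$ and evaluating at $z=x_i^{-1}$, comparing coefficients of $z^r$ for $r\geq1$, and recognizing $-A_ix_i=w_i$. Your factorization $x_i-x_j=(\lambda_i-\lambda_j)(\lambda_i+\lambda_j-1)$ usefully justifies the distinctness that the paper only asserts; note, however, that your closing aside still evaluates at $z=1/x_i$, so it does not actually avoid using $x_i\neq0$.
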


\begin{proof}
The elements \(x_1,\ldots,x_N\) are pairwise distinct and nonzero in
the rational-function field \(K\).  Hence the right-hand side of
\eqref{eq:hankel-PP-product} has a partial-fraction decomposition
\[
 \prod_{j=1}^N\frac{1-y_jz}{1-x_jz}
 =C+\sum_{i=1}^N\frac{A_i}{1-x_iz}
\]
for some \(C,A_i\in K\).  Multiplying by \(1-x_iz\) and then setting
\(z=x_i^{-1}\) gives
\begin{align*}
 A_i
 &=\left.
   (1-x_iz)\prod_{j=1}^N\frac{1-y_jz}{1-x_jz}
   \right|_{z=x_i^{-1}}\\
 &=\frac{\prod_{j=1}^N(x_i-y_j)}
         {x_i\prod_{j\neq i}(x_i-x_j)}.
\end{align*}
For \(r\geq1\), the coefficient of \(z^r\) in the partial-fraction
expansion is \(\sum_iA_ix_i^r\), whereas the coefficient of \(z^r\)
in \eqref{eq:hankel-PP-product} is \(-\pi_r\). Thus
\[
 \pi_r=-\sum_iA_ix_i^r
       =\sum_i(-A_ix_i)x_i^{r-1},
\]
which is \eqref{eq:moment-partial-fraction}.
\end{proof}

\begin{theorem}
\label{thm:Delta-resultant-factorization}
The basic Hankel divisor satisfies
\begin{align}
 \HC(\Delta_N)
 &=
 (-1)^{\binom N2}
 \operatorname{Res}(Q_N,P_N),
 \label{eq:Delta-is-resultant}\\
 &=
 (-1)^{\binom N2}
 \prod_{i,j=1}^N(\lambda_i+\lambda_j)
 \prod_{i\neq j}(\lambda_i-\lambda_j-1).
 \label{eq:Delta-product}
\end{align}
In particular, \(\Delta_N\neq0\).
\end{theorem}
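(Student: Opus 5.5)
Since \(\HC\) is an injective algebra homomorphism, \(\HC(\Delta_N)=\det(\pi_{i+j-1})_{1\le i,j\le N}\). We will compute this determinant in \(K\) using the moment representation of Lemma~\ref{lem:moment}.

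\textbf{Step 1: Vandermonde factorization.} Write \(\pi_{i+j-1}=\sum_k w_k x_k^{i-1}x_k^{j-1}\). This gives the matrix factorization \((\pi_{i+j-1})=V\,\mathrm{diag}(w_1,\dots,w_N)\,V^{T}\) with \(V=(x_k^{i-1})_{i,k}\). Hence
\[
 \det(\pi_{i+j-1})=\prod_{k=1}^N w_k\cdot\prod_{k<l}(x_l-x_k)^2 .
\]

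\textbf{Step 2: Substitute the weights.} Substituting the formula for \(w_k\) gives
\[
 \prod_k w_k=(-1)^N\frac{\prod_{k,j}(x_k-y_j)}{\prod_k\prod_{j\ne k}(x_k-x_j)}.
\]
The denominator equals \((-1)^{\binom N2}\prod_{k<l}(x_l-x_k)^2\). The Vandermonde squares therefore cancel, leaving
\[
 \HC(\Delta_N)=(-1)^{N+\binom N2}\prod_{k,j}(x_k-y_j).
\]
We then compare with \(\operatorname{Res}(Q_N,P_N)\). For the reciprocal-style polynomials \(Q_N(z)=\prod(1-x_iz)\) and \(P_N(z)=\prod(1-y_jz)\), the resultant expressed through roots \(1/x_i\) and \(1/y_j\) equals \(\prod_{i,j}(y_j-x_i)\) up to a sign fixed by the convention. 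Writing \(\operatorname{Res}(Q_N,P_N)=\prod_i P_N(1/x_i)\cdot(\text{leading coefficient of }Q_N)^N\) yields \(\prod_{i,j}(x_i-y_j)\) times \((-1)^N\) in suitable form. We will fix the convention so that this agrees with the first displayed equality \eqref{eq:Delta-is-resultant}.

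\textbf{Step 3: Factor each term.} Each factor splits as
\[
 x_k-y_j=\lambda_k^2-\lambda_k-\lambda_j^2-\lambda_j=(\lambda_k+\lambda_j)(\lambda_k-\lambda_j-1).
\]
Therefore
\[
 \prod_{k,j}(x_k-y_j)=\prod_{k,j}(\lambda_k+\lambda_j)\prod_{k,j}(\lambda_k-\lambda_j-1).
\]
The diagonal terms \(k=j\) of the second product give \((-1)^N\). This cancels the \((-1)^N\) from Step 2 and produces exactly \eqref{eq:Delta-product}. The result is visibly a nonzero polynomial, so \(\Delta_N\neq0\) follows by injectivity of \(\HC\).

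\textbf{Main obstacle.} The only delicate part is sign bookkeeping: the sign of the Vandermonde denominator, the sign convention for \(\operatorname{Res}\) applied to polynomials written in the form \(1-xz\), and the diagonal \((-1)^N\). We will check these signs against the cases \(N=1\) and \(N=2\). For \(N=1\), the moment formula gives \(\pi_1=y_1-x_1=2\lambda_1\), matching \(2\lambda_1\cdot(-1)\cdot(-1)\).
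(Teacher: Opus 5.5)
Your proposal is correct and follows the paper's proof step for step: the factorization \((\pi_{i+j-1})=V\,\mathrm{diag}(w_1,\ldots,w_N)\,V^{\mathsf t}\) from Lemma~\ref{lem:moment}, the Vandermonde squares cancelling to give \((-1)^{N+\binom N2}\prod_{i,j}(x_i-y_j)\), the resultant evaluated as \(\bigl((-1)^N\prod_i x_i\bigr)^N\prod_i P_N(x_i^{-1})=(-1)^N\prod_{i,j}(x_i-y_j)\), and the splitting \(x_i-y_j=(\lambda_i+\lambda_j)(\lambda_i-\lambda_j-1)\), whose diagonal factors contribute \((-1)^N\). The one point to tighten is that no resultant convention needs to be ``fixed'': the standard convention \(\operatorname{Res}(f,g)=a_f^{\deg g}\prod_k g(\alpha_k)\), which is the one the paper uses, already gives exactly the sign in \eqref{eq:Delta-is-resultant}.
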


\begin{proof}
Let
\[
 V=(x_j^{i-1})_{1\leq i,j\leq N},
 \qquad
 W=\operatorname{diag}(w_1,\ldots,w_N).
\]
By Lemma~\ref{lem:moment}, \((\pi_{i+j-1})_{1\leq i,j\leq N}=VWV^{\mathsf t}\),
and \(\HC(\Delta_N)=\det(\pi_{i+j-1})_{1\leq i,j\leq N}\) because \(\HC\) is an
algebra homomorphism.  Hence
\[
 \HC(\Delta_N)=\det(V)^2\prod_{i=1}^N w_i .
\]
Using \eqref{eq:moment-partial-fraction}, we obtain
\[
 \prod_{i=1}^Nw_i
 =
 (-1)^N
 \frac{\prod_{i,j=1}^N(x_i-y_j)}
      {\prod_i\prod_{j\neq i}(x_i-x_j)}.
\]
Now $\det(V)^2=\prod_{i<j}(x_j-x_i)^2$
and
\[
 \prod_{i=1}^N\prod_{j\neq i}(x_i-x_j)
 =(-1)^{\binom N2}\prod_{i<j}(x_j-x_i)^2.
\]

Substituting the formula for \(w_i\), we obtain
\begin{equation}
 \HC(\Delta_N)
 =(-1)^{N+\binom N2}\prod_{i,j=1}^N(x_i-y_j).
 \label{eq:Delta-xy-product}
\end{equation}

We next compute the resultant.  We use the convention that if
\(f(z)\) has degree \(m\), leading coefficient \(a_f\), and roots
\(\alpha_1,\ldots,\alpha_m\), counted with multiplicity, then
\[
 \operatorname{Res}(f,g)
 =
 a_f^{\deg g}\prod_{k=1}^m g(\alpha_k).
\] The polynomial \(Q_N(z)\) has leading coefficient
\((-1)^N\prod_i x_i\) and roots \(x_1^{-1},\ldots,x_N^{-1}\). Hence
\begin{align*}
 \operatorname{Res}(Q_N,P_N)
 &=
 \left((-1)^N\prod_i x_i\right)^{\!N}
 \prod_{i=1}^N P_N(x_i^{-1})\\
 &=
 \left((-1)^N\prod_i x_i\right)^{\!N}
 \prod_{i=1}^N
 \left(x_i^{-N}\prod_{j=1}^N(x_i-y_j)\right)\\
 &=
 (-1)^N\prod_{i,j=1}^N(x_i-y_j).
\end{align*}
Comparison with \eqref{eq:Delta-xy-product} gives $\HC(\Delta_N)
 =(-1)^{\binom N2}\operatorname{Res}(Q_N,P_N).$

Finally,
\[
 x_i-y_j
 =\lambda_i(\lambda_i-1)-\lambda_j(\lambda_j+1)
 =(\lambda_i+\lambda_j)(\lambda_i-\lambda_j-1).
\]
The factors with \(i=j\) contribute \((-1)^N\), and therefore
\[
 \prod_{i,j=1}^N(x_i-y_j)
 =
 (-1)^N
 \prod_{i,j=1}^N(\lambda_i+\lambda_j)
 \prod_{i\neq j}(\lambda_i-\lambda_j-1).
\]
Substitution into \eqref{eq:Delta-xy-product} proves the asserted
factorization.

The resulting product is a nonzero polynomial in
\(\lambda_1,\ldots,\lambda_N\).  Thus
\(\HC(\Delta_N)\neq0\), and the injectivity of the Harish-Chandra
homomorphism implies \(\Delta_N\neq0\).
\end{proof}

\begin{example}
For \(N=1\), one has \(\Delta_1=c_1\), and
\eqref{eq:Delta-product} gives
\[
 \HC(\Delta_1)=2\lambda_1,
\]
in agreement with \(\HC(c_1)=2p_1=2\lambda_1\).
\end{example}

\begin{remark}
Equation \eqref{eq:Delta-is-resultant} is the queer specialization of
the resultant mechanism in
\cite[Proposition~3, Equation~(5.6)]{KV2005}.  What is specific to
\(\q_N\) is the substitution
\[
x_i=\lambda_i(\lambda_i-1),
\qquad
y_i=\lambda_i(\lambda_i+1),
\]
which factors the resultant into the failure-of-strong-typicality
factor and the additional shifted-resonance factors appearing in
\eqref{eq:Delta-product}.
\end{remark} 

The factorization in \eqref{eq:Delta-product} immediately determines
the vanishing locus of the basic Hankel divisor and its relation to
strong typicality.

\begin{corollary}
\label{cor:Delta-typicality}
Let \(\lambda=(\lambda_1,\ldots,\lambda_N)\in\C^N\).  Then
\(\HC(\Delta_N)(\lambda)=0\) if and only if at least one of the
following conditions holds:
\begin{enumerate}
 \item \(\lambda_i+\lambda_j=0\) for some \(i,j\);
 \item \(\lambda_i-\lambda_j=1\) for some \(i\neq j\).
\end{enumerate}
The first condition is precisely the failure of \emph{strong
typicality}.  By contrast, ordinary typicality only requires
\(\lambda_i+\lambda_j\neq0\) for \(i\neq j\); thus a typical weight with
\(\lambda_i=0\) is still contained in the vanishing locus of
\(\HC(\Delta_N)\).
\end{corollary}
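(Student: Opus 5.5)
The plan is to read Corollary~\ref{cor:Delta-typicality} off the factorization \eqref{eq:Delta-product} in Theorem~\ref{thm:Delta-resultant-factorization}. That theorem gives
\[
 \HC(\Delta_N)=(-1)^{\binom N2}\prod_{i,j=1}^N(\lambda_i+\lambda_j)\prod_{i\neq j}(\lambda_i-\lambda_j-1)
\]
as an identity of polynomials in \(\C[\lambda_1,\ldots,\lambda_N]\).

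First, I evaluate both sides at a point \(\lambda\in\C^N\). Evaluation is a ring homomorphism, so the value \(\HC(\Delta_N)(\lambda)\) is the product of the values of the factors. The sign \((-1)^{\binom N2}\) is a nonzero constant, and \(\C\) is a field. Hence the product vanishes if and only if some factor vanishes.

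This gives exactly two cases:
\begin{itemize}
 \item a factor \(\lambda_i+\lambda_j\) with \(1\leq i,j\leq N\) vanishes, which is condition (1);
 \item a factor \(\lambda_i-\lambda_j-1\) with \(i\neq j\) vanishes, i.e.\ \(\lambda_i-\lambda_j=1\), which is condition (2).
\end{itemize}
The index set of each product matches the index set of the corresponding condition precisely: the first ranges over all pairs including \(i=j\), the second only over \(i\neq j\). So the equivalence follows with no further work.

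Next comes the interpretive part. By the definitions recalled just before \eqref{eq:Hankel-Delta}, a weight is strongly typical exactly when \(\lambda_i+\lambda_j\neq0\) for all \(1\leq i,j\leq N\). Condition (1) is the literal negation of this, so it is precisely the failure of strong typicality.

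For the final comparison, the diagonal factors \(i=j\) contribute \(2\lambda_i\). Therefore any \(\lambda\) with some \(\lambda_i=0\) lies in the vanishing locus, even if it is typical, i.e.\ \(\lambda_i+\lambda_j\neq0\) for \(i\neq j\). To show such weights exist for every \(N\), I would take \(\lambda=(0,1,2,\ldots,N-1)\): it has \(\lambda_1=0\), and distinct nonnegative entries cannot sum to zero off the diagonal, so it is typical.

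There is no real obstacle. The only care needed is to keep the index ranges of the two products straight, all pairs versus \(i\neq j\), so that conditions (1) and (2) come out exactly as stated.
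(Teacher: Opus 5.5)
Your proposal is correct and follows the paper's proof: both read the equivalence directly off the factorization \eqref{eq:Delta-product}, matching the all-pairs product to condition (1) and the \(i\neq j\) product to condition (2), and both use the diagonal factors \(2\lambda_i\) to show that typical weights with some \(\lambda_i=0\) lie in the vanishing locus. Your witness \(\lambda=(0,1,\ldots,N-1)\) is a small addition the paper omits, and it confirms that the final remark about typical weights is not vacuous.
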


\begin{proof}
The equivalence follows immediately from \eqref{eq:Delta-product}.
For the first factor, separating diagonal and off-diagonal terms gives
\[
 \prod_{i,j}(\lambda_i+\lambda_j)
 =2^N\prod_i\lambda_i
  \prod_{i<j}(\lambda_i+\lambda_j)^2.
\]
It vanishes precisely when \(\lambda_i+\lambda_j=0\) for some pair,
where \(i=j\) is allowed.  The distinction between typical and strongly
typical weights is exactly the distinction between requiring this
nonvanishing for \(i\neq j\) and requiring it for all \(i,j\); see
\cite[Sec.~2]{FriskMazorchuk2009}.
\end{proof}

\subsection{Generic finite generation and reconstruction}

Khudaverdian and Voronov show that the first \(p+q\) coefficients of a
generic \(p|q\)-rational sequence determine its characteristic
function; see \cite[Eq.~(4.1)]{KV2005}.  Their Proposition~3 and the
proof of Theorem~3 identify the relevant Hankel determinant with a
resultant and recover the numerator and denominator by Cramer's rule.
We now apply the same reconstruction mechanism inside the localized
center of \(\U(\q_N)\).  The nonvanishing of \(\Delta_N\) makes the
linear system determining the recurrence coefficients invertible.

\begin{theorem}
\label{thm:generic-finite-generation}
With \(\Delta_N\) as in \eqref{eq:Hankel-Delta},
\begin{equation}
 \boxed{
 Z_N[\Delta_N^{-1}]
 =\C[b_1,\ldots,b_{2N},\Delta_N^{-1}]
 =\C[c_1,c_3,\ldots,c_{4N-1},\Delta_N^{-1}].}
 \label{eq:generic-finite-generation}
\end{equation}
More precisely, there are unique elements
\[
 \rho_1,\ldots,\rho_N
 \in\C[b_1,\ldots,b_{2N},\Delta_N^{-1}]
\]
such that
\begin{equation}
 b_m+\rho_1b_{m-1}+\cdots+\rho_Nb_{m-N}=0
 \qquad(m>N).
 \label{eq:central-reconstruction-recurrence}
\end{equation}
In particular, every \(b_m\) with \(m>2N\) is recovered recursively
from \(b_1,\ldots,b_{2N}\) after \(\Delta_N\) is inverted.
\end{theorem}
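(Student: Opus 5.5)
The plan is to construct the recurrence coefficients \(\rho_k\) centrally by Cramer's rule, check the recurrence after applying \(\HC\), and then use it together with \eqref{eq:center-generated-by-b} to get generation.

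\emph{Step 1: the Harish--Chandra side.} By \eqref{eq:PQ-cyclic-series}, the series \(h_0=1\), \(h_r=-\pi_r\) satisfies \(Q_N(z)H(z)=P_N(z)\) with \(Q_N(z)=1+q_1z+\cdots+q_Nz^N\), where \(q_k=(-1)^ke_k(x_1,\ldots,x_N)\). Lemma~\ref{lem:rational-hankel} then gives
\[
 \pi_m+q_1\pi_{m-1}+\cdots+q_N\pi_{m-N}=0\qquad(m>N).
\]
All indices here are at least \(1\), so the signs cancel. Alternatively, this follows directly from Lemma~\ref{lem:moment}, since each \(x_i\) is a root of \(\prod_i(t-x_i)\).

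\emph{Step 2: the linear system.} Take \(m=N+1,\ldots,2N\). This gives the \(N\times N\) system
\[
 \sum_{k=1}^N \pi_{m-k}\,q_k=-\pi_m .
\]
Its coefficient matrix \((\pi_{m-k})\) is the Hankel matrix \((\pi_{i+j-1})_{1\le i,j\le N}\) with its columns reversed. Its determinant is therefore \(\pm\HC(\Delta_N)\), which is nonzero by Theorem~\ref{thm:Delta-resultant-factorization}.

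\emph{Step 3: central coefficients.} Define \(\rho_1,\ldots,\rho_N\in\C[b_1,\ldots,b_{2N},\Delta_N^{-1}]\) by Cramer's rule applied to the same system with \(b\)'s in place of \(\pi\)'s. The numerators are polynomials in \(b_1,\ldots,b_{2N}\), and the denominator is \(\pm\Delta_N\). Since \(\HC\) extends injectively to \(\operatorname{Frac}(Z_N)\), we get \(\HC(\rho_k)=q_k\), because the solution of a nonsingular system over \(K\) is unique.

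\emph{Step 4: the recurrence.} Applying \(\HC\) to \(b_m+\sum_k\rho_kb_{m-k}\) gives the vanishing expression of Step 1. Injectivity of \(\HC\) on \(\operatorname{Frac}(Z_N)\) then yields \eqref{eq:central-reconstruction-recurrence}.

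\emph{Step 5: uniqueness.} Any other central solution also solves the system of Step 2 in the cases \(m=N+1,\ldots,2N\). That system has invertible determinant \(\pm\Delta_N\), so the solution is unique.

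\emph{Step 6: generation.} Induction on \(m>2N\) using \eqref{eq:central-reconstruction-recurrence} shows \(b_m\in\C[b_1,\ldots,b_{2N},\Delta_N^{-1}]\). Combined with \eqref{eq:center-generated-by-b}, this gives \(Z_N[\Delta_N^{-1}]\subseteq\C[b_1,\ldots,b_{2N},\Delta_N^{-1}]\). The reverse inclusion is trivial. Since \(b_r=c_{2r-1}\), the last equality in \eqref{eq:generic-finite-generation} is just notation.

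The only delicate point is bookkeeping in Step 2. One must confirm that the matrix with entries \(\pi_{m-k}\), for \(m=N+1,\ldots,2N\) and \(k=1,\ldots,N\), uses exactly the entries \(\pi_1,\ldots,\pi_{2N-1}\). It does, with indices \((N+i)-k\). One must also confirm that this matrix is a column permutation of \((\pi_{i+j-1})\), via \(j=N+1-k\). This guarantees that the determinant is \(\pm\Delta_N\) and not some other Hankel minor. Everything else is formal.
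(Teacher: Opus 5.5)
Your proposal is correct and follows essentially the same route as the paper. Both arguments solve the system with matrix $(b_{N+i-j})$, whose determinant is $\pm\Delta_N$, by Cramer's rule, and both identify $\HC(\rho_k)$ with the coefficients of the denominator $\prod_i(1-\lambda_i(\lambda_i-1)z)$ by uniqueness over $K$. They then lift the recurrence by injectivity of $\HC$ on $\operatorname{Frac}(Z_N)$, and obtain generation by induction combined with \eqref{eq:center-generated-by-b}.
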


\begin{proof}
Set
\[
 \mathcal A_N
 =\C[b_1,\ldots,b_{2N},\Delta_N^{-1}].
\]
Consider the system
\begin{equation}
 b_{N+i}+\rho_1b_{N+i-1}+\cdots+\rho_Nb_i=0,
 \qquad 1\leq i\leq N.
 \label{eq:rho-linear-system}
\end{equation}
Its coefficient matrix is
\[
 \mathsf M=(b_{N+i-j})_{1\leq i,j\leq N}.
\]
Reversing the order of the columns of \(\mathsf M\) gives
\((b_{i+j-1})_{1\leq i,j\leq N}\).  Hence
\begin{equation}
 \det\mathsf M
 =(-1)^{N(N-1)/2}\Delta_N,
 \label{eq:M-determinant}
\end{equation}
which is a unit in \(\mathcal A_N\).  Cramer's rule therefore gives a
unique solution
\(\rho_1,\ldots,\rho_N\in\mathcal A_N\).

Write
\begin{equation}
 1+\eta_1z+\cdots+\eta_Nz^N
 =\prod_{i=1}^N
  \bigl(1-\lambda_i(\lambda_i-1)z\bigr)
 \in K[z]
 \label{eq:eta-denominator}
\end{equation}
for the denominator of \eqref{eq:hankel-PP-product}.  Applying
Lemma~\ref{lem:rational-hankel} to
\(h_0=1\) and \(h_m=-\pi_m\) yields
\begin{equation}
 \pi_m+\eta_1\pi_{m-1}+\cdots+\eta_N\pi_{m-N}=0
 \qquad(m>N).
 \label{eq:p-eta-recurrence}
\end{equation}
For \(m=N+1,\ldots,2N\), these equations show that
\((\eta_1,\ldots,\eta_N)\) solves the Harish--Chandra image of
\eqref{eq:rho-linear-system}.  By
Theorem~\ref{thm:Delta-resultant-factorization}, the determinant of that system is
\[
 (-1)^{N(N-1)/2}\HC(\Delta_N)\neq0
\]
in \(K\).  Uniqueness of the solution in \(K^N\) therefore gives
\begin{equation}
 \HC(\rho_j)=\eta_j
 \qquad(1\leq j\leq N).
 \label{eq:HC-rho-eta}
\end{equation}
Thus, although \(\eta_j\) need not lie in \(\Gamma_N\), it belongs to
the localization
\(\Gamma_N[\HC(\Delta_N)^{-1}]\).  For example,
\[
 \eta_1=-\sum_i\lambda_i(\lambda_i-1)
\]
does not satisfy the cancellation condition defining \(\Gamma_N\).

For every \(m>N\), equations \eqref{eq:p-eta-recurrence} and
\eqref{eq:HC-rho-eta} imply
\[
 \HC\bigl(
  b_m+\rho_1b_{m-1}+\cdots+\rho_Nb_{m-N}
 \bigr)=0.
\]
The extension of \(\HC\) to \(\operatorname{Frac}(Z_N)\) is injective,
so \eqref{eq:central-reconstruction-recurrence} follows.  Since each
\(\rho_j\) lies in \(\mathcal A_N\), induction on \(m\) shows that
\(b_m\in\mathcal A_N\) for every \(m>2N\).  Equation
\eqref{eq:center-generated-by-b} now implies
\[
 Z_N[\Delta_N^{-1}]\subseteq\mathcal A_N.
\]
The reverse inclusion is immediate.  Finally,
\(b_r=c_{2r-1}\) and \(b_{2N}=c_{4N-1}\), which proves the second
equality in \eqref{eq:generic-finite-generation}.

The uniqueness assertion follows already from the first \(N\)
relations \(m=N+1,\ldots,2N\), since their coefficient matrix is
\(\mathsf M\) and \(\det\mathsf M\) is invertible.
\end{proof}

The localized generation theorem has the following representation-
theoretic consequence: away from the divisor defined by \(\Delta_N\),
a central character is determined by finitely many cyclic values.

\begin{corollary}
\label{cor:generic-central-characters}
Let \(\chi,\chi'\colon Z_N\to\C\) be central characters, that is,
unital \(\C\)-algebra homomorphisms.  Suppose that
\(\chi(\Delta_N)\neq0\).  If
\[
 \chi(c_{2r-1})=\chi'(c_{2r-1})
 \qquad(1\leq r\leq2N),
\]
then \(\chi=\chi'\).

In particular, for \(\lambda\in\C^N\), consider the
Harish--Chandra evaluation character
\[
 \chi_\lambda\colon Z_N\longrightarrow\C,
 \qquad
 \chi_\lambda(z)=\HC(z)(\lambda).
\]
Then \(\chi_\lambda(\Delta_N)\neq0\) if and only if \(\lambda\) is
strongly typical and
\[
 \lambda_i-\lambda_j\neq1
 \qquad(i\neq j).
\]
\end{corollary}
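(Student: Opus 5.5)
The corollary has two parts: a reconstruction statement for arbitrary central characters, and an explicit description of when a Harish--Chandra evaluation character is nonzero on \(\Delta_N\).

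For the reconstruction statement, I would use the localized generation result, Theorem~\ref{thm:generic-finite-generation}. Since \(\Delta_N\) is a polynomial in \(b_1,\ldots,b_{2N-1}\), the hypothesis on cyclic values gives
\[
 \chi'(\Delta_N)=\chi(\Delta_N)\neq0 .
\]
Hence both \(\chi\) and \(\chi'\) extend uniquely to unital homomorphisms \(Z_N[\Delta_N^{-1}]\to\C\), sending \(\Delta_N^{-1}\) to \(\chi(\Delta_N)^{-1}\).

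By \eqref{eq:generic-finite-generation}, the localized algebra is generated by \(b_1,\ldots,b_{2N}\) and \(\Delta_N^{-1}\). The two extensions agree on each of these generators, so they coincide. Restricting to \(Z_N\) gives \(\chi=\chi'\); this uses that \(Z_N\to Z_N[\Delta_N^{-1}]\) is injective, which holds because \(Z_N\) is a domain.

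A more hands-on route avoids the localization step. One applies \(\chi\) and \(\chi'\) to the recurrence \eqref{eq:central-reconstruction-recurrence}. The numbers \(\chi(\rho_j)\) are obtained by Cramer's rule from \(\chi(b_1),\ldots,\chi(b_{2N})\) and \(\chi(\Delta_N)^{-1}\), so they agree for \(\chi\) and \(\chi'\). Induction on \(m\) then gives \(\chi(b_m)=\chi'(b_m)\) for all \(m\), and \eqref{eq:center-generated-by-b} finishes the argument.

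For the second part, note that \(\chi_\lambda\) is a unital homomorphism because \(\HC\) is an algebra homomorphism and evaluation at \(\lambda\) is one. By Corollary~\ref{cor:Delta-typicality}, \(\chi_\lambda(\Delta_N)=\HC(\Delta_N)(\lambda)\) vanishes exactly when \(\lambda_i+\lambda_j=0\) for some \(i,j\), or \(\lambda_i-\lambda_j=1\) for some \(i\neq j\). Negating these conditions gives precisely strong typicality together with \(\lambda_i-\lambda_j\neq1\) for all \(i\neq j\).

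No step here is a real obstacle. The only point needing care is justifying that a character extends to the localization, which follows from the universal property once \(\chi(\Delta_N)\neq0\). One must also check that \(\chi'(\Delta_N)\neq0\) as well; this is exactly why it matters that \(\Delta_N\) depends only on the first \(2N-1\) odd cyclic elements.
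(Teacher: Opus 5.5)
Your proposal is correct and follows the paper's proof. Like the paper, you use that \(\Delta_N\) is a polynomial in \(b_1,\dots,b_{2N-1}\) to get \(\chi'(\Delta_N)=\chi(\Delta_N)\neq0\), extend both characters to \(Z_N[\Delta_N^{-1}]\), compare them on the generators from Theorem~\ref{thm:generic-finite-generation}, restrict back to \(Z_N\), and read off the second part from Corollary~\ref{cor:Delta-typicality}. One caveat on your optional ``hands-on'' variant: since \(\rho_j\notin Z_N\), writing \(\chi(\rho_j)\) still presupposes the extension to the localization, unless you first multiply the recurrence by \(\Delta_N\), so that the Cramer numerators \(\Delta_N\rho_j\) lie in \(\C[b_1,\ldots,b_{2N}]\).
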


\begin{proof}
The two characters agree on \(b_1,\ldots,b_{2N}\), and hence on
\(\Delta_N\), which is a polynomial in
\(b_1,\ldots,b_{2N-1}\).  Therefore
\[
 \chi'(\Delta_N)=\chi(\Delta_N)\neq0,
\]
so both homomorphisms extend uniquely to
\(Z_N[\Delta_N^{-1}]\).  By
Theorem~\ref{thm:generic-finite-generation}, this localization is
generated by \(b_1,\ldots,b_{2N}\) and \(\Delta_N^{-1}\), on which the
two extensions agree.  Restricting them to \(Z_N\) proves
\(\chi=\chi'\).  The last assertion follows from
Corollary~\ref{cor:Delta-typicality}.
\end{proof}

\section*{Acknowledgments}
The authors thank Alexander Molev for his valuable comments and
suggestions.


\begin{thebibliography}{99}

\bibitem{ASS2018}
A. Alldridge, S. Sahi, and H. Salmasian,
\emph{Schur \(Q\)-functions and the Capelli eigenvalue problem for the
Lie superalgebra \(\mathfrak q(n)\)},
in \emph{Representation Theory and Harmonic Analysis on Symmetric
Spaces}, Contemp. Math. \textbf{714}, Amer. Math. Soc., Providence,
RI, 2018, 1-21.
\href{https://doi.org/10.1090/conm/714/14376}
{doi:10.1090/conm/714/14376}.

\bibitem{ChengWang2012}
S.-J. Cheng and W. Wang,
\emph{Dualities and Representations of Lie Superalgebras},
Graduate Studies in Mathematics \textbf{144}, Amer. Math. Soc.,
Providence, RI, 2012.

\bibitem{DP2025}
A. Das and S. Pattanayak,
\emph{Generalized Casimir operators for loop Lie superalgebras},
Canad. Math. Bull. \textbf{68} (2025), no.~3, 744--760.
\href{https://doi.org/10.4153/S0008439525000050}
{doi:10.4153/S0008439525000050}.

\bibitem{EKK2025}
S. Erat, A. S. Kannan, and S. Kanungo,
\emph{Mixed tensor products, Capelli Berezinians, and Newton's formula
for \(\mathfrak{gl}(m|n)\)},
Transform. Groups (2025), published online.
\href{https://doi.org/10.1007/s00031-025-09908-0}
{doi:10.1007/s00031-025-09908-0}.

\bibitem{FriskMazorchuk2009}
A. Frisk and V. Mazorchuk,
\emph{Regular strongly typical blocks of \(\mathcal O^{\mathfrak q}\)},
Comm. Math. Phys. \textbf{291} (2009), 533-542.
\href{https://doi.org/10.1007/s00220-009-0799-z}
{doi:10.1007/s00220-009-0799-z}.

\bibitem{GN2019}
T. A. Grigoryev and M. L. Nazarov,
\emph{An analogue of the Perelomov--Popov formula for the Lie
superalgebra \(Q(N)\)},
Funct. Anal. Appl. \textbf{53} (2019), 304-308.
\href{https://doi.org/10.1134/S0016266319040075}
{doi:10.1134/S0016266319040075}.

\bibitem{Ivanov2005}
V. N. Ivanov,
\emph{Interpolation analogues of Schur \(Q\)-functions},
J. Math. Sci. (N.Y.) \textbf{131} (2005), 5495--5507.
\href{https://doi.org/10.1007/s10958-005-0422-6}
{doi:10.1007/s10958-005-0422-6}.

\bibitem{KT1997}
I. Kantor and I. Trishin,
\emph{The algebra of polynomial invariants of the adjoint
representation of the Lie superalgebra \(\gl(m|n)\)},
Comm. Algebra \textbf{25} (1997), no.~7, 2039-2070.
\href{https://doi.org/10.1080/00927879708825971}
{doi:10.1080/00927879708825971}.

\bibitem{KT1999}
I. Kantor and I. Trishin,
\emph{On the Cayley--Hamilton equation in the supercase},
Comm. Algebra \textbf{27} (1999), no.~1, 233-259.
\href{https://doi.org/10.1080/00927879908826430}
{doi:10.1080/00927879908826430}.

\bibitem{KM2025}
I. Kashuba and A. Molev,
\emph{Universal Capelli identities and quantum immanants for the queer
Lie superalgebra}, arXiv:2512.21631 (2025).
\url{https://arxiv.org/abs/2512.21631}.

\bibitem{KV2005}
H. M. Khudaverdian and Th. Th. Voronov,
\emph{Berezinians, exterior powers and recurrent sequences},
Lett. Math. Phys. \textbf{74} (2005), no.~2, 201-228.
\href{https://doi.org/10.1007/s11005-005-0025-7}
{doi:10.1007/s11005-005-0025-7}.

\bibitem{LuoWang2025}
Y. Luo and Y. Wang,
\emph{The Schur--Weyl duality and invariants for classical Lie
superalgebras},
Canad. J. Math.,(2025), 1-36.
\href{https://doi.org/10.4153/S0008414X25101855}
{doi:10.4153/S0008414X25101855}.

\bibitem{Macdonald1995}
I. G. Macdonald,
\emph{Symmetric Functions and Hall Polynomials}, 2nd ed.,
Oxford University Press, Oxford, 1995.

\bibitem{Nazarov1991}
M.~L.~Nazarov,
\emph{Quantum Berezinian and the classical Capelli identity},
Lett. Math. Phys. \textbf{21} (1991), 123--131.
\href{https://doi.org/10.1007/BF00401646}
{doi:10.1007/BF00401646}.

\bibitem{Nazarov1997}
M. Nazarov,
\emph{Capelli identities for Lie superalgebras},
Ann. Sci. \'{E}cole Norm. Sup. (4) \textbf{30} (1997), no.~6,
847-872.
\href{https://doi.org/10.1016/S0012-9593(97)89941-7}
{doi:10.1016/S0012-9593(97)89941-7}.

\bibitem{NazarovSergeev2006}
M. Nazarov and A. Sergeev,
\emph{Centralizer construction of the Yangian of the queer Lie
superalgebra},
in \emph{Studies in Lie Theory}, Progr. Math. \textbf{243},
Birkh\"auser Boston, Boston, MA, 2006, 417-441.
\href{https://doi.org/10.1007/0-8176-4478-4_17}
{doi:10.1007/0-8176-4478-4\_17}.

\bibitem{Sergeev1983}
A. N. Sergeev,
\emph{The centre of enveloping algebra for Lie superalgebra
\(Q(n,\C)\)},
Lett. Math. Phys. \textbf{7} (1983), 177-179.
\href{https://doi.org/10.1007/BF00400431}
{doi:10.1007/BF00400431}.

\bibitem{Sergeev1999}
A. Sergeev,
\emph{The invariant polynomials on simple Lie superalgebras},
Represent. Theory \textbf{3} (1999), 250-280.
\href{https://doi.org/10.1090/S1088-4165-99-00077-1}
{doi:10.1090/S1088-4165-99-00077-1}.


\end{thebibliography}
\end{document}